\newtheorem{theorem}{Theorem}
\newcommand{\C}{\mathbb{C}}
\newcommand{\Z}{\mathbb{Z}}
\newcommand{\Zoladek}{{\.Zo{\l}\c{a}dek}}
\renewcommand{\div}{{\mbox{div}}}
\begin{document}
\date{}
\author{Waleed Aziz and Colin Christopher}
\title{\textbf{Local Integrability and Linearizability of Three-dimensional Lotka-Volterra Systems}}
\maketitle

\begin{abstract}
We investigate the local integrability and linearizability of three dimensional Lotka-Volterra equations at the origin.
Necessary and sufficient conditions for both integrability and linearizability are obtained for $(1,-1,1)$, $(2,-1,1)$
and $(1,-2,1)$-resonance.  To prove sufficiency, we mainly use the method of Darboux with extensions for
inverse Jacobi multipliers, and the linearizability of a node in two variables with power-series arguments in the third variable.
\end{abstract}

\section{Introduction}
In this paper we investigate the local integrability and linearizability of the origin for the
three dimensional Lotka-Volterra systems,
\begin{equation}\label{1}
\begin{aligned}
\dot x &= P = x(\lambda +ax+by+cz),\\
\dot y &= Q = y(\mu + dx +ey + fz),\\
\dot z &= R = z(\nu + gx + hy +kz),
\end{aligned}
\end{equation}
where $ \lambda, \mu, \nu \neq 0$.  Lotka-Volterra systems are widely used
with a diverse range of applications: population biology, chemical kinetics, laser physics,
plasma physics, neural networks etc. \cite{Cairo2000,CairoLlibre2000,Y.T.Christodoulides2009}

We say that the system is \textit{integrable} at the origin if there exists a change of coordinates
\[ X = x(1+O(x,y,z)),\quad Y = y(1+O(x,y,z)),\quad Z = z(1+O(x,y,z)), \]
bringing \eqref{1} to a system orbitally equivalent to its linear system:
\begin{equation}\label{1.1}
\dot X = \lambda X m,\quad \dot Y = \mu Y m,\quad \dot Z = \nu Z m,
\end{equation}
where $m = m(X,Y,Z) = 1 + O(X,Y,Z)$.  If the change of coordinates can be chosen so that $m \equiv 1$
then we say the system is \textit{linearizable}.

We are interested in the case where the eigenvalues $\lambda$, $\mu$ and $\nu$ are in
the Siegel domain and have two independent resonances.  Without loss of generality,
this means that we can assume (after a possible scaling of time) that $\lambda,\mu,\nu \in \Z$
with $gcd(\lambda,\mu,\nu) = 1$, and that $\lambda$, $\nu>0$ and $\mu< 0$.  We say that the origin has
$(\lambda:\mu:\nu)$-resonance.

Clearly, \eqref{1.1} has analytic first integrals
\begin{equation}\label{1.2}
\phi= X^{-\mu} Y^\lambda, \quad\mbox{and}\quad \psi = Y^\nu Z^{-\mu},
\end{equation}
which pull back to first integrals
\begin{equation}\label{two1stInt}
 \phi_1 = x^{-\mu}y^\lambda(1+O(x,y,z)),\quad\mbox{and}\quad \phi_2 = y^\nu z^{-\mu} (1+O(x,y,z)),
\end{equation}
of \eqref{1}.
Conversely, given two such first integrals, it is easy to construct a change of
coordinates such that $\phi$ and $\psi$ expressed in these new coordinates satisfy
\eqref{1.2}, and hence the transformed system is of the form \eqref{1.1} for some $m$.

Restricting \eqref{1} to $z = 0$ we can see the problem above as a
generalization of the problem of classifying the integrability
conditions for the system
\[ \dot x = x(\lambda +ax+by), \quad \dot y = y(\mu + dx +ey), \quad \lambda, \mu \in \mathbb{Z},\quad \lambda \mu < 0. \]

This problem was considered by several authors (Gravel and Thibault \cite{Gravel2002}, Christopher and Rousseau \cite{Christopher2004}, Li et al.\  \cite{C.Liu2004} and, for more general quadratic systems, by \.{Z}o\l\c{a}dek \cite{Zoladek97}, Fronville et al.\ \cite{AFronville1998} and Christopher et al.\ \cite{Christopher2003}) and gives a simple generalization of the Poincar\'e center-focus problem.  That is, to find conditions for which a local analytic first integral exists in a planar system.   In this case where $\lambda + \mu = 0$ we have a complexified version of the classical center-focus problem.  More recent work on integrability and linearizability of Lotka Volterra type systems with $(p,-q)$-resonance can be found in \cite{Gine2009,Gine2010,J.Gine2011,Wang2008}

Recall that in the classical center-focus problem, there are currently only two known mechanisms for integrability:
the existence of an algebraic symmetry or the existence of a Darboux integrating factor.   In the general case of $p:-q$ resonance, however,
other mechanisms appear to come into play: in particular, blow-down to a node and reduction to a Riccati equation
\cite{Christopher2003,AFronville1998}.
For the Lotka-Volterra system, many of these conditions were subsumed in \cite{Christopher2004} under a simple monodromy condition,
applied to the
neighborhoods of the invariant lines $x = 0$ and $y = 0$, and the line at infinity.  These conditions reduced essentially
to finite checks on the nature of the singularities (finite and infinite) of the system.

Our aim here is to see if a generalization of the Poincar\'e center-focus problem to higher dimension
gives a similarly simple list of integrability mechanisms.  We give a complete classification of the integrability conditions
for Lotka-Volterra equations with $(1:-1:1)$, $(2:-1:1)$ and $(1:-2:1)$ resonant critical points at the origin.  It was a surprise
to us that the problems of integrability was much harder in this case, giving rise to new forms of argument which rely less
on geometric properties than the form of the power series concerned.   This might be due to the fact that such resonant
singularities mix the saddle and node-like properties of their two-dimensional counterparts.  It would be interesting to know
whether there were more geometric ways of obtaining the sufficiency of these conditions.

Other work on 3D Lotka-Volterra equations has been done by Bobienski and \Zoladek\ \cite{Bobienski2005}, who consider the finite
singularity away from the axes planes, and give a number of mechanisms for the existence of a center in the $(i:-i:\lambda)$ case;
Cairo and Llibre \cite{Cairo2000,CairoLlibre2000}, who obtain a number of conditions for the existence of Darboux first integrals in terms of the
parameters; and Basov and Romanovski \cite{Basov2010}, who take one of eigenvalues equal to zero.  There has also been several works
devoted to systems which are homogeneous ($\lambda=\mu=\nu=0$) and hence reducible to a two-dimensional Lotka Volterra equation (\cite{Gao1998,Gonzalez2000,Ollagnier2001,Y.T.Christodoulides2009}).

We hope to extend this work to consider more general integrability phenomena in future work.  In particular, the case
where the system has a resonant integrable critical point with non-integer ratio of eigenvalues should also be approachable
using the methods here.  In these cases, the first integrals would no longer be analytic, but a product of powers of analytic functions:
$x^\alpha y^\beta z^\gamma (1+O(x,y,z))$.

The paper is organized as follows: In section 2, the Darboux method of integrability and inverse Jacobi multiplier together are explained as well as
the relation between integrability and linearizability. In Section~3, we give the complete classification of integrability and linearizability conditions of system (\ref{1}) with $(1:-1:1)$, $(2:-1:1)$ and $(1:-2:1)$resonance at the origin.


\section{Mechanisms for Integrability and Linearizability}

In this section we summarize the results we will need for understanding the integrability
mechanisms which appear in our calculations.  These methods can, in most cases, be extended to more
general situations, but we focus here on the specific context of system \eqref{1}.

\subsection{Reduction to the Poincar\'e domain}

A singular point whose eigenvalues lie in the Poincar\'e domain (that is, the convex hull of the eigenvalues does not contain the origin) can be brought to normal form via an analytic change of coordinates.  In particular, a node with two analytic separatrices can have no resonant terms in its normal form and so must be analytically linearizable.

We use this principle in two ways.  Firstly, in many cases we can choose a coordinate system so that two of the variables decouple to give a linearizable node at the origin.  If this is so, it just remains to find a linearizing transformation for the third variable via some simple power series arguments. Secondly, and more rarely, we can perform a blow down to a three-dimensional system in the Poincar\'e domain.  Since this new system is linearizable, we can find two first integrals which we can pull back to first integrals of the original system.

\subsection{Darboux Integrability and Inverse Jacobi Multipliers}

Our second main mechanism for proving integrability is the existence of a Darboux first integral.
The method is well know for two-dimensional systems \cite{C.Christopher2000,Christopher1999,Gine2010,Gravel2002},
but has been used also for
higher dimensional systems \cite{Cairo2000,CairoLlibre2000,Ollagnier1997,Y.T.Christodoulides2009}.

Let
\[X = P \frac\partial{\partial x} + Q \frac\partial{\partial y} + R \frac\partial{\partial z},\]
be the associated vector field to the system \eqref{1}.
Given a polynomial $F\in \C[x,y,z]$, a surface $F=0$ is called an invariant algebraic surface
of the system \eqref{1}, if the polynomial $F$ satisfies the equation
\begin{equation}\label{eq2}
\dot F=\mathit{X}F=P\frac{\partial F}{\partial x}+Q\frac{\partial F}{\partial y}+R\frac{\partial F}{\partial z}=C_F F
\end{equation}
for some polynomial $C_F\in$ $\mathbb{C}$. Such a polynomial is called the {\it cofactor} of the invariant algebraic curve $F=0$.
One can note that from equation \eqref{1} that any cofactor has at most degree one since the polynomial vector field has degree two.

To complete the study of integrals of parametric families, we will also need the notion of {\it exponential factor} which plays
the same role of as an invariant algebraic surface in the case when two such surfaces coalesce.
Let $E(x, y, z)=\exp(f(x, y, z)/g(x, y, z))$ where $f, g \in \C[x, y, z]$, then $E$ in an {\it exponential factor} if
\begin{equation}
\mathit{X}E=C_E E,
\end{equation}
for some polynomial $C_E$ of degree at most one.  The polynomial $C_E$ is called the {\it cofactor} of $E$.

A {\it Darboux} function is a function of the form,
\[
     D = \prod F_i^{\lambda_i} E^{\lambda_0 f/g},
\]
where the $F_i$ are invariant algebraic surfaces of the system, and $E = \exp(f/g)$ is an exponential factor.
Given a Darboux function, $D$, we can compute
\[
     X(D) = D\Big(\sum{\lambda_i C_{F_i} + \lambda_0 C_E}\Big).
\]
Clearly, the function $D$ is a non-trivial first integral of the system if and only if
the cofactors $C_{F_i}$ and $C_E$ are linearly dependent.

For Darboux integrability in two dimensions, we seek a Darboux function which is either
a first integral or integrating factor for the system.  From the latter, it is possible
to find a first integral by quadratures.

In higher dimensions, the role of the integrating factor is taken by the Jacobi Multiplier.
In the context of Darboux integrability, we usually consider the corresponding reciprocals:
inverse integrating factors, and inverse Jacobi multipliers \cite{Berrone2003}.
A function $M$ is an inverse Jacobi multiplier for the vector field $X$ if it satisfies the equation
\[
     X(M) = M \mbox{div}(X) \qquad \Longleftrightarrow \qquad \mbox{div}(X/M) = 0.
\]
A Darboux inverse Jacobi multiplier, $D$,  must satisfy $\lambda_i C_{F_i} + \lambda_0 C_E = \mbox{div}(X)$.

In three dimensions, the existence of two independent first integrals implies the
existence of an inverse Jacobi multiplier.  Conversely, given just one first integral, $\phi$,
and an inverse Jacobi multiplier, $M$, one can construct another first integral in the following
manner.

Suppose that the level surfaces $\phi=c$ are locally parameterized by some function $z = f_c(x,y)$.
Using the $x$ and $y$ coordinates to parameterize $\phi=c$, we obtain a vector field
\[P(x,y,f_c(x,y)) \frac\partial{\partial x} + Q(x,y,f_c(x,y)) \frac\partial{\partial y}.\]
It can be shown \cite{Berrone2003} that
\[ M(x,y,f_c(x,y))\frac{\partial\phi}{\partial z}(x,y,f_c(x,y))\]
is an inverse integrating factor for this vector field.  Hence, by quadratures along $\phi = c$,
we can construct a second first integral $\psi_c(x,y)$ for each value of $c$.
The function $\psi_{\phi(x,y,z)}(x,y)$ gives a second first integral of the system.

However, in our case, we would like more control on the form of this second first integral.
The following theorem allows us to give an explicit expression for the integral in certain cases.
We use the usual multi-index notation $X^I = x^i y^j z^k$ to simplify the notation.
\begin{theorem}
Suppose the analytic vector field
\begin{equation*}
x(\lambda +\sum_{|I|>0} A_{xI}X^I)\frac\partial{\partial x}
+y(\mu + \sum_{|I|>0} A_{yI}X^I)\frac\partial{\partial y}+
z(\nu + \sum_{|I|>0} A_{zI}X^I)\frac\partial{\partial z},
\end{equation*}
has a first integral
$\phi = x^\alpha y^\beta z^\gamma (1+O(x,y,z))$
with at least one of $\alpha$, $\beta$, $\gamma \neq 0$ and a Jacobi multiplier $M = x^r y^s z^t (1+O(x,y,z))$ and suppose
that the cross product of $(r-i-1,s-j-1,t-k-1)$ and $(\alpha,\beta,\gamma)$ is bounded away from
zero for any integers $i,j,k \ge 0$, then the system has a second analytic first integral
of the form $\psi = x^{1-r} y^{1-s} z^{1-t} (1+O(x,y,z))$, and hence the system \eqref{1} is integrable.
\end{theorem}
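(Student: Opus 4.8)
The plan is to construct the second first integral directly as a Darboux-type quadrature along the level surfaces of $\phi$, but to organize the computation monomial-by-monomial so that the hypothesis on the cross product gives a manifest term-by-term bound, hence convergence. First I would recall from the discussion preceding the theorem (following \cite{Berrone2003}) that if $\phi$ is a first integral and $M$ an inverse Jacobi multiplier, then restricting $X$ to a level surface $\phi = c$ (parameterized by $x,y$) yields a planar vector field admitting $M\,\partial\phi/\partial z$ as an inverse integrating factor; integrating by quadratures produces a second first integral $\psi$. The issue is purely one of \emph{form} and \emph{convergence}: I want to show the resulting $\psi$ is of the stated type $x^{1-r}y^{1-s}z^{1-t}(1+O(x,y,z))$ and is actually analytic at the origin.

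The key computational step is to write $\psi$ as a formal power series with an explicit monomial expansion. Normalize $\phi$ and $M$ by their leading monomials and expand $1/M = x^{-r}y^{-s}z^{-t}\sum_{I} m_I X^I$ with $m_0=1$, and similarly for the factor coming from $\partial\phi/\partial z$. The condition that $\psi$ be a first integral, $X(\psi)=0$, becomes a recursion on the coefficients $\psi_I$ of $\psi = x^{1-r}y^{1-s}z^{1-t}\sum_I \psi_I X^I$. Writing $X$ in the form $x(\lambda + \cdots)\partial_x + \cdots$, the linear part contributes, to the coefficient of the monomial $x^{1-r+i}y^{1-s+j}z^{1-t+k}$ in $X(\psi)$, the factor
\[
\big(\lambda(1-r+i) + \mu(1-s+j) + \nu(1-t+k)\big)\,\psi_{I},
\]
while the nonlinear terms of $X$ and the already-known series for $\phi$ and $M$ contribute an expression in the $\psi_{I'}$ with $|I'|<|I|$. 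Thus the recursion is solvable provided the scalar multiplying $\psi_I$ never vanishes. Using that $\alpha\lambda+\beta\mu+\gamma\nu = 0$ (since $\phi$ is a first integral, its leading monomial must have zero cofactor) and that for an inverse Jacobi multiplier $r\lambda+s\mu+t\nu = \lambda+\mu+\nu$ (the leading-order content of $X(M)=M\,\mathrm{div}\,X$), one checks that the vanishing of $\lambda(1-r+i)+\mu(1-s+j)+\nu(1-t+k)$ is exactly the statement that $(i-r+1,\,j-s+1,\,k-t+1)$ is orthogonal to $(\lambda,\mu,\nu)$; and since $(\alpha,\beta,\gamma)$ is also orthogonal to $(\lambda,\mu,\nu)$ (and nonzero), this forces $(i-r+1,j-s+1,k-t+1)$ to be parallel to $(\alpha,\beta,\gamma)$, i.e.\ the cross product vanishes --- precisely what is excluded. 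So the formal series $\psi$ exists and has the claimed leading monomial.

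For analyticity, the hypothesis gives more than non-vanishing: the cross product is bounded \emph{away from} zero, so the divisor $\lambda(1-r+i)+\mu(1-s+j)+\nu(1-t+k)$ is bounded below in absolute value by a positive constant times $|I|$ (for $|I|$ large), since along the relevant lattice directions this linear form grows linearly. A standard majorant-series argument then applies: the numerator of the recursion is a convergent power series operation applied to the convergent series of the vector field, $\phi$, and $M$, and dividing by a quantity of size $\gtrsim |I|$ (or at worst bounded below by a fixed constant) does not destroy convergence --- one sets up a majorant $\Psi$ dominating $\psi$ term by term and verifies it satisfies a majorizing functional equation with an analytic solution. I expect this majorant/convergence estimate to be the main obstacle: it requires care in bookkeeping the small divisors and in checking that ``bounded away from zero'' of the cross product genuinely yields a linear-in-$|I|$ lower bound on the divisor rather than merely positivity. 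Once $\psi$ is shown analytic, we have two independent analytic first integrals $\phi$ and $\psi$ (independence is clear from their distinct leading monomials, using that $(\alpha,\beta,\gamma)$ and $(1-r,1-s,1-t)$ are not proportional, which again follows from the cross-product hypothesis with $i=j=k=0$), and by the remarks in the introduction around \eqref{1.2}--\eqref{two1stInt} the system \eqref{1} is integrable at the origin.
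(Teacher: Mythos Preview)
Your recursion-and-majorant approach has a genuine gap at its core step. You claim that if the divisor $D_I:=\lambda(1-r+i)+\mu(1-s+j)+\nu(1-t+k)$ vanishes then $(r-i-1,s-j-1,t-k-1)$ must be parallel to $(\alpha,\beta,\gamma)$, but this does not follow: in three dimensions the orthogonal complement of $(\lambda,\mu,\nu)$ is two-dimensional, so orthogonality to $(\lambda,\mu,\nu)$ of both $v_I:=(r-i-1,s-j-1,t-k-1)$ and $\delta:=(\alpha,\beta,\gamma)$ in no way forces $v_I\parallel\delta$. Concretely, using the IJM identity you quote, $D_I=0$ reduces to $i\lambda+j\mu+k\nu=0$, and in the resonant cases at hand (e.g.\ $(1,-1,1)$) there are infinitely many nonnegative integer triples $(i,j,k)$ satisfying this while $v_I\times\delta$ remains large. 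So your recursion has genuine zero divisors that the cross-product hypothesis does not exclude; you would need a separate argument that the right-hand side of the recursion vanishes at each such resonance, and none is given. The convergence claim inherits the same defect: even where $D_I\neq0$, it need not grow like $|I|$ (with $\lambda=\nu=1$, $\mu=-1$ one has $D_I=i-j+k+\text{const}$, which can stay bounded), so the majorant bound you sketch also fails.

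The paper sidesteps both problems by an explicit construction rather than a recursion. After a preliminary change of coordinates making $\phi=x^\alpha y^\beta z^\gamma$ exactly and absorbing the unit factor of $M$ into the vector field so that $M=x^r y^s z^t$, the first-integral and inverse-Jacobi-multiplier conditions become, monomial by monomial, $\delta\cdot A_I=0$ and $(\theta-I-\mathbf{1})\cdot A_I=0$ with $\theta=(r,s,t)$. The cross-product hypothesis says $\delta$ and $\theta-I-\mathbf{1}$ are linearly independent, so necessarily $A_I=k_I\,(\theta-I-\mathbf{1})\times\delta$ for some scalar $k_I$. A short computation with the associated $2$-form then gives $\Omega/M=d\big(\sum_I k_I X^{I-\theta+\mathbf{1}}\big)\wedge d\phi/\phi$, so $\psi=\sum_I k_I\,X^{I-\theta+\mathbf{1}}$ is a first integral; and the bound $|k_I|\le K^{-1}|A_I|$, coming directly from the cross-product lower bound, yields convergence with no small-divisor analysis whatsoever. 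The essential difference is that the paper divides by $|v_I\times\delta|$, which is controlled by hypothesis, whereas your recursion divides by $v_I\cdot(\lambda,\mu,\nu)$, which is not.
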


\begin{proof}
Without loss of generality, we assume that $\alpha>0$.  After an analytic change of coordinates of the form
$(x,y,z)\mapsto (x(1+O(x,y,z)),y(1+O(x,y,z)),z(1+O(x,y,z)))$, which will not alter the form of the vector field
or the inverse Jacobi multiplier, we can assume that
$\phi = X^\delta$ where $\delta = (\alpha,\beta,\gamma)$.
Furthermore, by absorbing the factor $(1+O(x,y,z))$ of $M$ into the vector field itself, we can take the
inverse Jacobi multiplier $M$ to be $X^\theta$, where $\theta=(r, s, t)$.
We take $A_I = (A_{xI},A_{yI},A_{zI})$ and write $A_{(0,0,0)} = (\lambda,\mu,\nu)$.

From the hypothesis, we can take $K>0$ such that for all $I$,
\begin{equation}\label{boundedK}
 |(\theta - I - \textbf{1})\,\times \delta| > K.
\end{equation}
Furthermore, since $\phi$ is a first integral, then $\mathit{X}\phi=0$ gives for all $I$
\begin{equation}\label{tt1}
\delta \cdot A_I=0.
\end{equation}

Since $M$ is a Jacobi multiplier, we have $X(M)=\mbox{div}(X)M$, and writing
\[\div(\mathit{X})=\frac{\partial P}{\partial x}+\frac{\partial Q}{\partial y}+\frac{\partial R}{\partial z}
=\textbf{1}\cdot \ell+\sum_{I}(I+\textbf{1})A_{xI}X^I,\]
where $\textbf{1}=(1, 1, 1)$, we see that, for all $I$,
\begin{equation}\label{tttt1}
(\theta - I - \textbf{1}) \cdot A_I=0.
\end{equation}

By hypothesis, $(\theta - I - \textbf{1})$ and $\delta$ are linearly independent, and
so \eqref{tt1} and \eqref{tttt1} imply that there exists some constants $k_I$ such that
\begin{equation}\label{tttttt1}
    A_I=k_I\,(\theta - I - \textbf{1})\,\times \delta.
\end{equation}

For ease of calculation we work with the associated 2-form $\Omega = P\,dy\wedge dz+Q\,dz\wedge dx+R\,dx\wedge dy$
rather than $X$.  In this case, a function $\phi$ is a first integral if and only if $d\phi \wedge\Omega=0$.  Now
\begin{equation}\label{t6}
\begin{aligned}
\frac{\Omega}{M}& =\frac{P\,dy\wedge dz+Q\,dz\wedge dx+R\,dx\wedge dy}{X^{\theta}}\\
& = \sum_{I} \big(k_I\,(\theta - I - \textbf{1})\,\times \delta \big)\cdot(\frac{dydz}{yz}, \frac{dzdx}{zx}, \frac{dxdy}{xy}) \: X^{I-\theta +1}\\
&  = \sum_{I} k_I\,\big((\theta - I - \textbf{1}) \cdot (\frac{dx}{x}, \frac{dy}{y}, \frac{dz}{z})\big)\wedge \big(\delta \cdot (\frac{dx}{x}, \frac{dy}{y}, \frac{dz}{z})\big) \: X^{I-\theta +1}\\
& = \sum_{I} k_I\,\big((\theta - I - \textbf{1}) \cdot (\frac{dx}{x}, \frac{dy}{y}, \frac{dz}{z}) \: X^{I-\theta +1}\big)\wedge \frac{d\phi}{\phi}\\
& =d\big( \sum_{I} k_I\: X^{I-\theta +1}\big)\wedge \frac{d\phi}{\phi}
\end{aligned}
\end{equation}
Thus, we have a formal first integral of the form
\[\psi=\sum_{I} k_I\: X^{I-\theta +1}.\]
From \eqref{boundedK} and \eqref{tttttt1}, we must have that
$|k_I|<K |A_I|$ for all $I$, and so $\psi$ is in fact analytic.
\qedhere
\end{proof}

\subsection{Integrability and Linearizability}

In some cases it is easy to deduce linearizability of a singularity from integrability.

\begin{theorem}\label{linthm}
If the system \eqref{1} is integrable and there exists a function
$\xi = x^\alpha y^\beta z^\gamma (1+O(x,y,z))$ such that $X(\xi)= k\xi$ for some constant $k=\alpha\lambda+\beta\mu+\gamma\nu$,
then the system is linearizable.
\end{theorem}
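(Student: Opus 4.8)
The plan is to exploit integrability to first bring \eqref{1} to the normalized form \eqref{1.1}, then to extract from $\xi$ an analytic unit $w=1+O$ that records the failure of linearizability, and finally to divide the coordinates by suitable integer powers of $w$ to remove $m$.

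By the definition of integrability there is an analytic change of coordinates $(x,y,z)\mapsto(x(1+O),y(1+O),z(1+O))$ after which the vector field becomes $X=mX_0$, where $X_0=\lambda X\partial_X+\mu Y\partial_Y+\nu Z\partial_Z$ and $m=1+O(X,Y,Z)$. This transformation leaves the hypotheses intact: $\xi$ remains of the form $X^\alpha Y^\beta Z^\gamma(1+O)$, and the relation $X(\xi)=k\xi$ persists because $X(\xi)$ is a coordinate-free quantity. Write $\xi=X^\alpha Y^\beta Z^\gamma\,u$ with $u=1+O$ analytic and non-vanishing near the origin. Since $X_0(X^\alpha Y^\beta Z^\gamma)=(\alpha\lambda+\beta\mu+\gamma\nu)X^\alpha Y^\beta Z^\gamma=k\,X^\alpha Y^\beta Z^\gamma$, we get $X_0(\xi)=k\xi+X^\alpha Y^\beta Z^\gamma X_0(u)$; comparing this with $mX_0(\xi)=X(\xi)=k\xi$ gives
\[ X(u)=k(1-m)\,u,\qquad\text{i.e.}\qquad X(\log u)=k(1-m). \]

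We may assume $k\neq 0$ (for $k=0$ the function $\xi$ is itself a first integral and is not used). Put $w:=u^{1/k}=\exp\!\big(\tfrac1k\log u\big)$, an analytic unit with $w(0)=1$ and $X(w)=(1-m)\,w$. Define new coordinates $\tilde X=Xw^{\lambda}$, $\tilde Y=Yw^{\mu}$, $\tilde Z=Zw^{\nu}$. Because $\lambda,\mu,\nu\in\Z$ and $w$ is a unit, these are analytic of the form $X(1+O),\,Y(1+O),\,Z(1+O)$ with identity Jacobian at the origin, hence a legitimate change of coordinates. Using $X(X)=\lambda Xm$ and $X(w)=(1-m)w$ one computes $X(\tilde X)=\lambda Xw^{\lambda}m+\lambda Xw^{\lambda}(1-m)=\lambda\tilde X$, and likewise $X(\tilde Y)=\mu\tilde Y$, $X(\tilde Z)=\nu\tilde Z$. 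Thus in the composite coordinates the system is $\dot{\tilde X}=\lambda\tilde X$, $\dot{\tilde Y}=\mu\tilde Y$, $\dot{\tilde Z}=\nu\tilde Z$, so \eqref{1} is linearizable.

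The only point requiring care — rather than a genuine obstacle — is carrying out the normalization first: this is what turns $u$ into an honest analytic unit, so that the fractional power $u^{1/k}$ is well defined and, crucially, so that the integer powers $w^{\lambda},w^{\mu},w^{\nu}$ are analytic. (One can instead try to build the linearizing coordinates directly as power products of $\xi$ with the monomial first integrals of \eqref{two1stInt}, but that route meets non-integral exponents on factors like $x^{-\mu}y^{\lambda}$.) The role of the hypothesis on $k$ is precisely to guarantee $k\neq 0$, equivalently that $(\alpha,\beta,\gamma)$ is not orthogonal to $(\lambda,\mu,\nu)$.
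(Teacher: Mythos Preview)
Your proof is correct and follows essentially the same route as the paper's: pass to the integrable normal form \eqref{1.1}, write $\xi=X^\alpha Y^\beta Z^\gamma u$ with $u=1+O$ (the paper calls this $\xi_0$), deduce $X(u^{1/k})=(1-m)u^{1/k}$, and then rescale by $u^{\lambda/k},u^{\mu/k},u^{\nu/k}$ to kill $m$. One small remark: the integrality of $\lambda,\mu,\nu$ is not what makes $w^\lambda,w^\mu,w^\nu$ analytic---any complex power of a unit $w=1+O$ is analytic---so your argument works without that restriction, just as the paper's does.
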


\begin{proof}
To see this, note that \eqref{1.1} implies that
$\xi_0 = \xi X^{-\alpha} Y^{-\beta} Z^{-\gamma}=1+O(x,y,z)$ satisfies $X(\xi_0^{1/k}) = \xi_0^{1/k}(1-m)$.  Hence
$(\tilde{x},\tilde{y},\tilde{z})=(X\xi_0^{\frac{\lambda}{k}},Y\xi_0^{\frac{\mu}{k}},Z\xi_0^{\frac{\nu}{k}})$ is a linearizing change of coordinates.
\end{proof}

The Lotka-Volterra equations have another property which was first noted in \cite{Christopher2004} for two dimensional systems.
However, we do not use this condition explicitly in what follows.

\begin{theorem}
Consider three dimensional Lotka-Volterra system (\ref{1}) for which the three separatrices at the origin
$x=0$, $y=0$ and $z=0$ have cofactors $L_x$, $L_y$ and $L_z$ respectively. If  $L_x$, $L_y$, $L_z$ and the
divergence $\div(X)$ are linearly independent then the origin is integrable if and only if it is linearizable.
\end{theorem}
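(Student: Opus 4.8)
The implication linearizable $\Rightarrow$ integrable is immediate from the definitions, since \eqref{1.1} with $m\equiv 1$ carries the first integrals $X^{-\mu}Y^{\lambda}$ and $Y^{\nu}Z^{-\mu}$, which pull back to analytic first integrals of \eqref{1}. For the converse the plan is to apply Theorem~\ref{linthm}; it therefore suffices to exhibit a function $\xi=x^{\alpha}y^{\beta}z^{\gamma}(1+O(x,y,z))$ with $X(\xi)=k\xi$ for some nonzero constant $k$, since comparing the lowest-order terms on the two sides then forces $k=\alpha\lambda+\beta\mu+\gamma\nu$, which is the relation demanded by that theorem.

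So assume \eqref{1} is integrable. As recalled in the introduction, this is equivalent to the existence of two first integrals $\phi_{1}=x^{-\mu}y^{\lambda}(1+O(x,y,z))$ and $\phi_{2}=y^{\nu}z^{-\mu}(1+O(x,y,z))$. These are functionally independent, so they produce, as in Section~2, an inverse Jacobi multiplier $M$, which we take to be the factor appearing in $X=M\,\nabla\phi_{1}\times\nabla\phi_{2}$; thus $X(M)=M\,\div(X)$. I would then compute the lowest-order part of $\nabla\phi_{1}\times\nabla\phi_{2}$ and divide it by the linear part $(\lambda x,\mu y,\nu z)$ of $X$: since $\lambda,\mu,\nu\neq 0$ the leading monomials do not cancel, and one finds that $M$ may be normalized so that $M=x^{r}y^{s}z^{t}(1+O(x,y,z))$ for some exponents $r,s,t$ (explicitly $(1+\mu,\,1-\lambda-\nu,\,1+\mu)$, but only the existence of such a form is used). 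Recall finally that the three separatrices give $X(x)=L_{x}x$, $X(y)=L_{y}y$, $X(z)=L_{z}z$.

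The one substantive step is now the following. The four functions $L_{x},L_{y},L_{z},\div(X)$ all lie in the four-dimensional vector space spanned by $1,x,y,z$, and the hypothesis says precisely that they form a basis of it; in particular there are constants $a_{1},a_{2},a_{3},a_{4}$ with
\[
a_{1}L_{x}+a_{2}L_{y}+a_{3}L_{z}+a_{4}\,\div(X)=1 .
\]
I set $\xi=x^{a_{1}}y^{a_{2}}z^{a_{3}}M^{a_{4}}$. Because the logarithmic derivative along $X$ is additive over products and picks up the exponent on a power, and because $X(M)/M=\div(X)$,
\[
\frac{X(\xi)}{\xi}=a_{1}L_{x}+a_{2}L_{y}+a_{3}L_{z}+a_{4}\,\div(X)=1 ,
\]
so $X(\xi)=\xi$. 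Since $M=x^{r}y^{s}z^{t}(1+O(x,y,z))$ and a unit raised to any power is again a unit, $\xi$ has the form $x^{\alpha}y^{\beta}z^{\gamma}(1+O(x,y,z))$ with $(\alpha,\beta,\gamma)=(a_{1},a_{2},a_{3})+a_{4}(r,s,t)$, and comparing lowest-order terms in $X(\xi)=\xi$ gives $\alpha\lambda+\beta\mu+\gamma\nu=1\neq 0$. Theorem~\ref{linthm} then delivers linearizability.

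I expect the step needing the most care to be the verification that the inverse Jacobi multiplier built from $\phi_{1}$ and $\phi_{2}$ really is a monomial times a unit: this is the leading-order computation with $\nabla\phi_{1}\times\nabla\phi_{2}$ mentioned above, where the non-vanishing of $\lambda,\mu,\nu$ is exactly what keeps the leading monomial alive. The remaining ingredients --- the linear algebra that produces $a_{1},\dots,a_{4}$, and the manipulation of logarithmic derivatives and of (possibly non-integer) powers of units --- are routine.
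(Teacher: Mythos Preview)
Your argument is correct and follows essentially the same route as the paper: from the two first integrals one extracts a Jacobi multiplier of the form $x^{r}y^{s}z^{t}(1+O(x,y,z))$, and the linear independence of $L_x,L_y,L_z,\div(X)$ then provides exactly the freedom needed to manufacture a function with prescribed (constant) cofactor. The only cosmetic difference is that the paper works with the three unit parts $\varphi_1,\psi_1,\phi$ (whose cofactors have no constant term) and writes down the linearizing coordinates $X=x\,\varphi_1^{\alpha_1}\psi_1^{\alpha_2}\phi^{\alpha_3}$ etc.\ directly, whereas you build a single $\xi$ with $X(\xi)=\xi$ and then invoke Theorem~\ref{linthm}; these are equivalent packagings of the same linear-algebra step.
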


\begin{proof}
Suppose that the origin is integrable. Then there exists two independent first integrals
$\varphi=x^{-\mu}y^{\lambda}\varphi_{1}(x, y, z)$ and $\psi=y^{\nu}z^{-\mu}\psi_{1}(x, y, z)$ where
$\varphi_{1}(x, y, z)=1+O(x, y, z)$ and $\psi_{1}(x, y, z)=1+O(x, y, z)$ are analytic.
The functions $\varphi_{1}(x, y, z)$ and $\psi_{1}(x, y, z)$ obey the equations
\[ X\varphi_{1} = \varphi_{1} L_{\varphi_{1}},  \qquad X\psi_{1} = \psi_{1} L_{\psi_{1}},\]
where
\[ L_{\varphi_{1}} = \mu L_x - \lambda L_y,  \qquad L_{\psi_{1}} = -\nu L_y + \mu L_z.\]

Since $\varphi$ and $\psi$ are first integrals, then $d\varphi \wedge \Omega=0$, $d\psi \wedge \Omega=0$ and
$d\varphi \wedge d\psi=M\,\Omega$ where $\Omega=P\,dy\wedge dz+Q\,dz\wedge dx+R\,dx\wedge dy$ as before,
and \textit{M} is a Jacobi multiplier.  One can easily show that
\[M=x^{-(\mu+1)}y^{\lambda+\nu-1}z^{-(\mu+1)}\phi(x, y, z)\]
where $\phi(x, y, z)$ satisfies $\phi(0, 0, 0)=-\mu \neq 0$ and $\phi$ has a cofactor given by the divergence
plus a linear combination of $L_x$, $L_y$ and $L_z$.
Hence the cofactors $L_x$, $L_y$, $L_z$ and $\phi$ are linearly
independent. Note that the cofactors $L_{\varphi_{1}}$,
$L_{\psi_{1}}$ and $L_{\phi_{1}}$ have no constant term. The condition on linear independence
implies that we can find a change of coordinates
$X=x\,\varphi^{\alpha_{1}}_{1} \, \psi^{\alpha_{2}}_{1} \,
\phi^{\alpha_{3}}$, $Y=y\,\varphi^{\beta_{1}}_{1} \,
\psi^{\beta_{2}}_{1} \, \phi^{\beta_{3}}$ and
$Z=z\,\varphi^{\gamma_{1}}_{1} \, \psi^{\gamma_{2}}_{1} \,
\phi^{\gamma_{3}}$ which linearizes the system.\qedhere
\end{proof}

\section{Integrability and Linearizability Conditions}

In this section, we will give a complete classification of the integrability and linearizability conditions for the origin of
\eqref{1} with $(1:-1:1)$, $(2:-1:1)$ and $(1:-2:1)$-resonance.  We note that $x=0$, $y=0$ and $z=0$ are
always invariant algebraic surfaces with cofactors $\lambda +ax+by+cz$, $\mu + dx +ey + fz$ and
$\nu + gx + hy +kz$ respectively.  The necessary conditions were found by computing the conditions for the existence of
two independent first integrals up to a given degree in the form \eqref{two1stInt} using Maple.  The degrees used $6$, $10$ and $12$ respectively.
A factorized Gr\"obner basis was then found using Reduce and finally the minAssGTZ algorithm in Singular \cite{DGPS} was used to check that the conditions found were irreducible.  To prove sufficiency, we exhibit first integrals in the form \eqref{two1stInt}.

For linearizability, we proceeded similarly: computing the conditions for the existence of a linearizing change of coordinates up to some finite order to find necessary conditions, and exhibiting a linearizing change of coordinates for sufficiency.  In this case, the first integrals can be obtained easily by pulling back the first integrals of the linearized system \eqref{two1stInt}.

\subsection{$(1:-1:1)$-resonance}

\bigskip
\begin{theorem}
Consider three dimensional Lotka-Volterra system (\ref{1}) with $(\lambda,\mu,\nu)=(1,-1,1)$. The origin is integrable if and only if one of the
following conditions are satisfied:
\begin{equation*}
\begin{aligned}
1) \: & ab - de=ac - 2ak + gk=ae + ah - de - eg=af + ak - dk - gk=\\
      & bd + bg - de - dh=bf - ch - fh + hk=bk - ce + ek - hk=\\
      & cd + cg - 2dk + fg - gk=ef - hk= 0\\
2) \: & b =  d  =  f  = h  =  0\\
3) \: & f=g=h=b-e=d-a  =  0\\
3^*) \: & b=c=d=f-k=e-h= 0\\
4) \: & b=c=d=f=k =  0\\
4^*) \: & a=d=g=h=f= 0\\
5) \: & b=e=h= 0\\
\end{aligned}
\end{equation*}
Moreover, the system is linearizable if and only if either one of the conditions (2)-(5)
or one of the following holds:
\begin{equation*}
\begin{aligned}
1.1) \: & a=c=d=f=g=k=0\\
1.2) \: & a=bk-ch=d=e-h=f-k=g=0\\
1.2^*) \: & a-d=b-e=c=dh-eg=f=k=0\\
1.3) \: & a-g=b-h=c-k=d-g=e-h=f-k=0\\
\end{aligned}
\end{equation*}
\end{theorem}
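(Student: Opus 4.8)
The necessary conditions are obtained computationally, as already indicated: one computes, degree by degree, the obstructions to the existence of two independent formal first integrals of the form \eqref{two1stInt} (respectively, of a formal linearizing change of coordinates); the ideal they generate defines a variety containing the integrable (respectively, linearizable) locus, and its primary decomposition — checked to be irreducible component by component — is exactly the list $(1)$--$(5)$ (respectively $(2)$--$(5)$ together with $(1.1)$--$(1.3)$). So the whole content of the theorem rests on sufficiency, which is also what closes the argument, a finite jet being unable by itself to guarantee convergence.

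\textbf{Sufficiency: the linearizable components.} Conditions $(2)$--$(5)$ and $(1.1)$--$(1.3)$ in fact force linearizability, which I would prove by exhibiting an explicit linearizing change of coordinates in each case; the first integrals \eqref{two1stInt} then follow immediately by pulling back \eqref{1.2}. The mechanism is always a triangular structure that emerges once the stated relations are imposed. In $(2)$, $(4)$, $(4^*)$, $(5)$, $(1.1)$ the system splits into one or two scalar Bernoulli equations, possibly a two-variable Lotka--Volterra node with equal eigenvalues (which, having two analytic separatrices, is analytically linearizable by the reduction to the Poincar\'e domain of Section~2.1), and any remaining equations, each linear in the logarithm of its own variable with coefficients analytic in the others, and hence linearized by a power-series argument in that variable. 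In $(3)$, $(3^*)$, $(1.2)$, $(1.2^*)$ the decoupling is only partial: two of the variables form a linearizable $(1:-1)$ Lotka--Volterra block while the third enters either as a decoupled scalar equation or is slaved to that block, the coupling being absorbed through analytic functions defined by linear first-order ODEs; e.g.\ in $(3)$ one checks that $S=axF(z)-by+1$, where $z(1+kz)F'+(1+cz)F=1$, $F(0)=1$, is an analytic invariant surface with cofactor $ax+by$, and $(x/(S(1+kz)^{c/k}),\ y/S,\ z/(1+kz))$ linearizes the system. In $(1.3)$, where $a=d=g$, $b=e=h$, $c=f=k$, the plane $T:=1+ax-by+cz=0$ is invariant with cofactor $ax+by+cz$ and $(x/T,\ y/T,\ z/T)$ already works; alternatively, once integrability is known, Theorem~\ref{linthm} applies as soon as a suitable $\xi=x^{\alpha}y^{\beta}z^{\gamma}(1+O)$ with $X(\xi)=k\xi$ is exhibited.

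\textbf{Sufficiency: component $(1)$.} This is the genuinely hard case: the system does not decouple, it is in general not linearizable, and the three coordinate planes alone do not produce a Darboux first integral. The plan is to read off, from the nine defining relations, the additional Darboux data they encode — I expect one or two further invariant planes, and possibly a quadric or an exponential factor — and to compute the corresponding cofactors. If enough of these land in the span of the cofactors $\lambda+ax+by+cz$, $\mu+dx+ey+fz$, $\nu+gx+hy+kz$ of the coordinate planes, one obtains two independent Darboux first integrals directly; otherwise one assembles a single Darboux first integral $\phi=x^{\alpha}y^{\beta}z^{\gamma}(1+O)$ together with a Darboux inverse Jacobi multiplier $M=x^{r}y^{s}z^{t}(1+O)$ — matching a linear combination of the available cofactors to the divergence — and invokes the theorem on inverse Jacobi multipliers of Section~2.2. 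Since all exponents are then explicit, its hypothesis that $(r-i-1,s-j-1,t-k-1)\times(\alpha,\beta,\gamma)$ be bounded away from zero for all $i,j,k\ge 0$ reduces to a finite arithmetic check, and the theorem supplies the second first integral of the form \eqref{two1stInt}, hence integrability.

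\textbf{Expected main obstacle.} All the real difficulty is concentrated in component $(1)$: finding the right supplementary invariant surfaces, confirming that there are enough of them with cofactors in the precise linear-dependence configuration needed for a first integral and/or an inverse Jacobi multiplier, and securing the analyticity of the second first integral — via the cross-product condition of Section~2.2 or otherwise. The remaining components are routine once the appropriate decoupling or invariant surface is spotted, and organizing the case analysis is essentially bookkeeping guided by the primary decomposition.
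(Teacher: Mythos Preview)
Your proposal is correct and follows essentially the same architecture as the paper: computational necessity via primary decomposition, then case-by-case sufficiency through decoupling to linearizable nodes, explicit invariant surfaces, and Darboux constructions. Two remarks on the details. First, component $(1)$ is much simpler than you anticipate: a single invariant plane $\ell=1+ax-ey+kz$ (for $e\neq 0$; exponential factors replace it when $e=0$) with cofactor $ax+ey+kz$ already produces two Darboux first integrals $\phi_1=xy\,\ell^{-1-b/e}$ and $\phi_2=yz\,\ell^{-1-h/e}$ directly, so the inverse Jacobi multiplier route is never invoked in this theorem. Second, your treatment of Case~$(3)$ via the ODE-defined invariant surface $S=axF(z)-by+1$ is a genuine variant of the paper's argument, which instead first applies the rational substitution $X=x/(1+ax-by)$, $Y=y/(1+ax-by)$ to decouple a linearizable $(X,z)$-node and then linearizes $Y$ by a convergent power-series exponential; your version has the advantage of giving the linearization in closed form up to the solution of a scalar linear ODE, while the paper's avoids introducing that auxiliary function at the cost of a less explicit final transformation.
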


\noindent{\it Proof}. Cases 3*, 4* and 1.2* are dual to Cases 3, 4 and 1.2 under the transformation $(x,y,z)\mapsto(z,y,x)$,
and do not need to be considered separately.  The other cases are considered below.

\bigskip

\noindent\textbf{Case 1}:
If $e\neq0$, the system has an invariant algebraic plane $\ell=1+ax-ey+kz = 0$ with cofactor
$L_\ell =ax+ey+kz$, and we have two independent first integrals
\[\phi_1=xy\ell^{-1-\frac{b}{e}}, \qquad  \phi_2=yz\ell^{-1-\frac{h}{e}}.\]

When $e=0$ we have several cases.

\begin{trivlist}{}{}

\item{\textbf{i)}} $\bf b = h = 0$: This is a subcase of Case 5.

\item{\textbf{ii)}} $\bf h=0, b\neq 0$:  In this case, we have $a=f=k=g+d=0$ and we get an
exponential factor $\ell=\exp(dx-by+cz)$ with cofactor $dx+by+cz$, and first integrals
$\phi_1 = x y \ell^{-1}$ and $\phi_2 = y z$.

\item{\textbf{iii)}} $\bf b = 0, h\neq 0$:  In this case, we have $a = d = k = 0$ and $c = f(b/h-1)$.
We get an exponential factor $\ell =\exp(gx-hy+fz)$ with cofactor $gx+hy+fz$.  This gives first
integrals $\phi_1 = xy$ and $\phi_2 = yz \ell^{-1}$.

\item{\textbf{iv)}} $\bf b, h \neq 0$:  In this case, we have $a=k=0$, $c=f(b/h-1)$ and $g = d(h/b-1)$.
We get an exponential factor $\ell = \exp(dhx-bhy+bfz)$ with cofactor $dhx+bhy+bfz$.
This gives first integrals $\phi_1 = xy \ell^{-\frac{1}{h}}$ and $\phi_2 = y z \ell^{-\frac{1}{b}}$.
\end{trivlist}

\noindent\textbf{Case 1.1}:
If $e\neq0$, we have an invariant plane $1-ey = 0$, and the change of coordinates $(X, Y, Z)=(x(1-ey)^{-\frac{b}{e}}, y(1-ey)^{-1}, z(1-ey)^{-\frac{h}{e}})$ linearizes the systems. When $e=0$, we replace $(1-ey)^{-\frac{b}{e}}$ and $(1-ey)^{-\frac{h}{e}}$ above by $\exp(by)$ and $\exp(hy)$ respectively.

\medskip

\noindent\textbf{Case 1.2}:
When $e\neq 0$
The linearizing change of variables is given by
$(X, Y, Z)=(x(1-ey+fz)^{-\frac{b}{e}}, y(1-ey+fz)^{-1}, z(1-ey+fz)^{-1})$.
When $e=0$, then either $b=0$ and $f\neq 0$, and we have a linearizing change of coordinates
$(X, Y, Z)=(x(1+fz)^{c/f}, y(1+fz)^{-1}, z(1+fz)^{-1})$,
or $f=0$ and we linearize by $(X, Y, Z)=(x\exp(by-cz), y, z)$.

\medskip

\noindent\textbf{Case 1.3}:
In this case we have an invariant plane $\ell=1+ax-by+fz$ with cofactor $L_{\ell}=ax+by+fz$ and the linearizing change is $(X, Y, Z)=(x\,\ell^{-1}, y\,\ell^{-1}, z\,\ell^{-1})$.

\medskip
\noindent\textbf{Case 2}:
In this case we have an invariant algebraic plane $\ell=1-ey = 0$ with cofactor $L_\ell =ey$.
Writing $Y=y/(1-ey)$, we obtain $\dot{Y}=-Y$.  Furthermore, the equations
\begin{equation}\label{1eq16}
\dot x =  x(1+ax+cz),\qquad
\dot z =  z(1+gx+kz),
\end{equation}
gives a linearizable node. Thus, there exists $X = x(1+O(x,z))$ and
$Z = z(1+O(x,z))$ such that $\dot X =  X$ and $\dot Z =  Z$.


\bigskip
\noindent\textbf{Case 3}:
The transformation $X=x/(1+ax-by)$, $Y=y/(1+ax-by)$ gives
\[
\dot X =X(1+cz-acXz),\qquad \dot Y=Y(-1-acXz),\qquad  \dot z =z(1+kz).
\]
The first and third equations give a linearizable node, and so we can find a change of
coordinates $\tilde{X} = X(1+O(X,z))$ and $Z = z(1+O(X,z))$ such that
$\dot{\tilde{X}} =  \tilde{X}$ and $\dot Z =  Z$ (in fact $Z=z/(1+kz)$).
Since $\dot Y=Y(-1-acX(\tilde{X}, Z)\,z(Z))$,
it is sufficient to find a function $\ell(X,Y)$ such that $\dot \ell(X, Z)=X(\tilde{X},Z)z(Z)$,
then the substitution $\tilde{Y}=Ye^{ac\ell}$ will give $\dot{\tilde{Y}}=-\tilde{Y}$, and the system
is linearized.

Writing $X(\tilde{X},Z)z(Z)=\sum_{i+j>0}a_{ij}X^{i}Z^{j}$, it is easy to see that
$\ell(X, Z)=\sum_{i+j>0}\frac1{i+j}a_{ij}X^{i}Z^{j}$ gives a convergent expression for $\ell$.


\bigskip
\noindent\textbf{Case 4}:
This system has two invariant algebraic planes $1+ax=0$ and $1-ey=0$ with cofactors
$ax$ and $ey$ respectively, and a linearizing change of coordinates
\[(X, Y, Z)=(x(1+ax)^{-1}, y(1-ey)^{-1}, z(1+ax)^{-\frac{g}{a}}(1-ey)^{-\frac{h}{e}}).\]

When $a=0$, we have an exponential factor $\exp(x)$, and we replace $(1+ax)^{g/a}$ by $\exp(gx)$.
Similarly, when $e=0$, we have an exponential factor $\exp(y)$ and replace $(1-ey)^{h/e}$ by $\exp(-hy)$.

\bigskip
\noindent\textbf{Case 5}:
The equations in $x$ and $z$ are independent of $y$, and give a linearizable node
\[
\dot x =  x(1+ax+cz),\qquad
\dot z =  z(1+gx+kz).
\]
Hence there exists a change of
coordinates $X = x(1+O(x,z))$ and $Z = z(1+O(x,z))$ such that $\dot X =  X$ and $\dot Z =  Z$.
The remaining equation is given by
\[\dot y = y(-1+dx(X, Z)+fz(X, Z)).\]
Suppose there exists a function $\ell(X,Z)$ such that $\dot \ell(X, Z)=(dx(X, Z)+fz(X, Z))$,
then the transformation $Y=ye^{-\ell}$ will gives $\dot Y = -Y$.

Writing $dx(X,Z)+fz(X,Z)=\sum_{i+j>0}a_{ij}X^{i}Z^{j}$, we have
$\ell =\sum_{i+j>0}\frac{a_{ij}}{i+j}X^{i}Z^{j}$, which is clearly convergent.

\qed

\bigskip
\subsection{$(2:-1:1)$-resonance}

\bigskip
\begin{theorem}
We Consider three dimensional Lotka-Volterra system (\ref{1}) with $(\lambda,\mu,\nu)=(2,-1,1)$. Under the following (1)-(11) conditions, the origin is integrable:
\begin{equation*}
\begin{aligned}
1) \: & ef - hk =ab - ah - de + eg=ac - 3ak + 2gk=ae + ah - de - eg=\\
      & af + ak - dk - gk=bd + bg - de - 2dh + eg=bf - ch - 2fh + 2hk=\\
      & bk - ce + 2ek - 2hk=cd + cg - 3dk + 2fg - gk= 0\\
2) \: & b+e  = c = d  =  f  = k  =  0\\
3) \: & b+h=c+k=d=e-h=f-k  =  0\\
4) \: & a-d=b-e=f=g=h= 0\\
5) \: & b+e=d=f=h =  0\\
6) \: & a=d=f=g=h= 0\\
7) \: & b=c=e-h=f-k= 0\\
8) \: & b=e=h=0\\
9) \: & b=f=h=0\\
10) \: & b=c=f=k=0\\
11) \: & b=c-4k=e+h=f+k=0\\
\end{aligned}
\end{equation*}
Moreover, the origin is linearizable if and only if the system satisfied either one of the conditions (2)-(10) or one of the following conditions:
\begin{equation*}
\begin{aligned}
1.1) \: & a=c=d=f=g=k=0\\
1.2) \: & a-d=b-e=c=dh-eg=f=k=0\\
1.3) \: & a=bk-ch=d=e-h=f-k=g=0\\
1.4) \: & a-g=b-h=c-k=d-g=e-h=f-k=0\\
\end{aligned}
\end{equation*}
\end{theorem}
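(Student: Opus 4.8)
\noindent\emph{Proof proposal.}
The necessity of conditions (1)--(11) for integrability, and of (2)--(10) together with (1.1)--(1.4) for linearizability, is obtained by the finite computation already described at the start of Section~3: one computes the obstructions to the existence of two independent first integrals of the form \eqref{two1stInt} (resp.\ to a linearizing change of coordinates) up to the stated order with Maple, passes to a factorized Gr\"obner basis, and checks irreducibility of each component with minAssGTZ. Unlike the $(1:-1:1)$ case there is no symmetry to exploit here, since $\lambda=2\neq\nu=1$ so the involution $(x,y,z)\mapsto(z,y,x)$ changes the resonance; hence every case must be treated directly. For sufficiency the only tools needed are those of Section~2: the Darboux method (now seeking first integrals $\phi_1=xy^2\ell^{p}$, $\phi_2=yz\,\ell^{q}$ from an auxiliary invariant plane or exponential factor $\ell$), reduction of two of the variables to a linearizable Lotka--Volterra node followed by a convergent power-series quadrature for the third, the inverse Jacobi multiplier construction of Theorem~1 when the second integral is not visibly Darboux, and Theorem~\ref{linthm} to upgrade integrability to linearizability.

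\smallskip
For the generic Case~1 (the nine-generator ideal) the plan is to look for an invariant algebraic plane $\ell=1+\alpha x+\beta y+\gamma z$ whose cofactor $L_\ell$ is tuned so that appropriate powers $xy^2\ell^{p}$ and $yz\,\ell^{q}$ have identically vanishing Lie derivative; the nine relations should be exactly the conditions for such an $\ell$ to exist. As in subcases i)--iv) of Case~1 of the preceding theorem, when the leading coefficient of this plane (typically the coefficient of $y$) vanishes the plane degenerates to an exponential factor $\exp(\cdots)$, and one must split into finitely many sub-branches according to which of the remaining coefficients are zero, each forcing extra relations already contained in the ideal and yielding explicit $\phi_1,\phi_2$. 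I expect this bookkeeping of the degenerate sub-branches, together with the verification that they exhaust the variety of the nine-generator ideal, to be the main obstacle of the proof.

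\smallskip
The remaining integrability cases (2)--(11) should each fall into one of two patterns. In the first, a subset of the hypotheses (e.g.\ of the type $b=h=0$, possibly after a preliminary Lotka--Volterra substitution $X=x/\ell$, $Y=y/\ell$) makes the equations for $x$ and $z$ independent of $y$, giving a planar Lotka--Volterra system with eigenvalues $2,1$; since both coordinate axes are invariant the only candidate resonant monomial $z^2$ cannot appear, so this node is analytically linearizable. After linearizing it to $\dot X=2X$, $\dot Z=Z$, one integrates the surviving scalar equation $\dot y=y(-1+dx(X,Z)+fz(X,Z))$ by taking $\ell(X,Z)=\sum\frac{a_{ij}}{2i+j}X^iZ^j$, which converges, and setting $Y=ye^{-\ell}$, exactly as in Cases~3 and~5 of the $(1:-1:1)$ theorem; in one or two cases a blow-down to a three-dimensional system in the Poincar\'e domain may be the cleaner route. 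In the second pattern (expected for the non-node branches of Case~1 and for Case~11) one exhibits a Darboux first integral together with a Darboux inverse Jacobi multiplier $M=x^ry^sz^t(1+O(x,y,z))$ and applies Theorem~1; here one must verify the cross-product bound $|(\theta-I-\mathbf 1)\times\delta|>K$, which for the specific exponent vectors occurring at $(2:-1:1)$ reduces to checking that no exact cancellation happens, a finite but possibly delicate verification.

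\smallskip
For linearizability, in each of Cases~(2)--(10) the integrability argument above already produces either an explicit linearizing change of coordinates (the node-reduction cases) or a Darboux factor $\xi=x^\alpha y^\beta z^\gamma(1+O(x,y,z))$ with $X(\xi)=k\xi$, $k=2\alpha-\beta+\gamma$, so Theorem~\ref{linthm} applies directly; the second integrals then come for free by pulling back \eqref{two1stInt} from the linear system \eqref{1.2}. For the four extra loci (1.1)--(1.4), which are the linearizable subvarieties of Case~1, the plan is to write down the linearizing transformation explicitly in terms of the invariant plane(s) $\ell$ of Case~1 raised to suitable powers, of the shape $X=x\,\ell^{a_1}$, $Y=y\,\ell^{a_2}$, $Z=z\,\ell^{a_3}$ (or using two separate invariant planes), replacing any power $\ell^{c/\gamma}$ by $\exp(\cdots)$ when the relevant coefficient vanishes, precisely as in Cases~1.1--1.3 of the $(1:-1:1)$ theorem. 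That Case~11 and the rest of Case~1 are genuinely \emph{not} linearizable is recorded by the necessity computation, consistently with their absence from the second list.
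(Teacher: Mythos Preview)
Your two-pattern dichotomy (linearizable $(x,z)$-node plus quadrature $Y=ye^{-\ell(X,Z)}$ for $y$, or Darboux first integral plus IJM and Theorem~1) does not cover Cases~9, 10 and~11, and these are precisely where the paper introduces new arguments. In Case~9 ($b=f=h=0$) the $(x,z)$-node is fine, but the surviving equation is $\dot y=y(-1+dx+ey)$ with the $ey$ term present, so no substitution $Y=ye^{-\ell(X,Z)}$ can work; the paper instead finds an invariant surface $\ell(X,Z)+\chi(X,Z)\,y=0$ with cofactor $dx+ey$ by solving $\dot\ell=dx\,\ell$ and $\dot\chi-\chi=e\ell$ term by term, then sets $Y=y/(\ell+\chi y)$. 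Case~10 ($b=c=f=k=0$) is harder still: only $x$ decouples, the pair $(x,y)$ has eigenvalues $(2,-1)$ and is a \emph{saddle}, not a node, so your first pattern does not apply. After linearizing $(x,y)$ via the same invariant-surface device one must solve $\dot\gamma=gx+hy$ in the variables $(X,Y)$ with $\dot X=2X$, $\dot Y=-Y$; the denominators $2i-j$ now vanish at $j=2i$, and the core of the proof is showing that $y(X,Y)$ contains no monomial $X^nY^{2n}$, via the identity $(2X/2m)\,d(\chi^{2m})/dX-\chi^{2m}=e\,\ell\,\chi^{2m-1}$. Your plan has no mechanism for this resonance.

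In Case~11 there is no Darboux first integral at all. The paper sets $w=yz$, obtains $\dot w=w(d+g)x$, and constructs $\psi=\sum_i w^i\psi_i(x,z)$ with $\dot\psi=x$ by a recursive power-series argument whose key step is showing that each $\psi_i$ may be taken divisible by $x$ (so the constant-term obstruction at every stage disappears); only then does $\phi_1=yz\,e^{-(d+g)\psi}$ together with the monomial IJM $x^{3/2}y^{1+(g+a/2)/(d+g)}z^{(g+a/2)/(d+g)}$ feed into Theorem~1 to yield the second integral. Your proposal assumes the first integral in this pattern is itself Darboux, which it is not here.
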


\noindent{\it Proof}.  We treat the cases individually.
\bigskip

\noindent\textbf{Case 1}:
If $e\neq0$, then we have an invariant surface $\ell=1+\frac{a}{2}x-ey+kz=0$
with cofactor $L_{\ell} =ax+ey+kz$.  From this we can construct
two independent first integrals
\[\phi_1=x\, y^2\,\ell^{-2-\frac{b}{e}}\quad\mbox{and}\quad \phi_2=y\, z\, \ell^{-1-\frac{h}{e}}.\]

When $e=0$ and $h\neq 0$ then we have an exponential factor $\ell =\exp((d+g)x/2-hy+fz)$ with
cofactor $(d+g)x + hy + fz$ with corresponding first integrals
\[\phi_1=x\, y^2\,\ell^{-\frac{b}{h}}\quad \mbox{and}\quad \phi_2=y\, z\, \ell^{-1}.\]

If $e = 0$ and $h = 0$ then we can assume $b\neq 0$ since otherwise we are in Case 8.
The conditions then give an exponential factor $\ell = \exp(gx+by-cz)$ with cofactor $2gx-by-cz$
and corresponding first integrals $\phi_1=x\, y^2\,\ell$ and $\phi_2=y\, z$.

\bigskip

\noindent\textbf{Case 1.1}:
If $e\neq0$, the linearizing change of coordinates is given by
\[(X, Y, Z)=(x(1-ey)^{-\frac{b}{e}}, y(1-ey)^{-1}, z(1-ey)^{-\frac{h}{e}}).\]
When $e=0$, we have an exponential factor $\exp(y)$, then replace $(1-ey)^{-\frac{b}{e}}$ and $(1-ey)^{-\frac{h}{e}}$ by $\exp(by)$ and $\exp(hy)$ respectively.

\bigskip

\noindent\textbf{Case 1.2}:
For $b\neq 0$, a linearizing change of variables is given by
\[(X, Y, Z)=(x(1+\frac{a}{2}x-by)^{-1}, y(1+\frac{a}{2}x-by)^{-1}, z(1+\frac{a}{2}x-by)^{-\frac{h}{b}}).\]
When $b=0$, either $h=0$ and $a \neq 0$, and we can linearize by
\[(X, Y, Z)=(x(1+\frac{a}{2}z)^{-1}, y(1+\frac{a}{2}z)^{-1}, z(1+\frac{a}{2}z)^{-g/a}),\]
or $a=0$ and we linearize by $(X, Y, Z)=(x, y, z\exp(by-gx))$.

\bigskip

\noindent\textbf{Case 1.3}:
This case is exactly similar to case 1.2.

\bigskip

\noindent\textbf{Case 1.4}:
In this case we have an invariant surface $\ell=1+\frac{a}{2}x-by+fz$ with cofactor $L_{\ell}=ax+by+fz$ and the linearizing change is give by $(X, Y, Z)=(x\,\ell^{-1}, y\,\ell^{-1}, z\,\ell^{-1})$.

\bigskip
\noindent\textbf{Case 2}:
In this case we have invariant planes $\ell_1=1+\frac{a}{2}x+by=0$, $\ell_2=1+\frac{a}{2}x-\frac{ab}{2}xy=0$ and $\ell_3=1+by=0$
with cofactors $L_{\ell_1} =ax-by$, $L_{\ell_2} =ax$ and $L_{\ell_3} =-by$ respectively.

The change of coordinates $(X, Y, Z)=(x\,\ell^{-1}_{1}\ell_3^{2}, y\ell^{-1}_{3}, z\,\ell^{-\frac{g}{a}}_{2}\ell^{\frac{h}{b}}_{3})$
linearizes the system.
When $b=0$ we can replace $\ell_3^{h/b}$ by $\exp(hy)$ and, when $a=0$, we replace $\ell_2^{-g/a}$ with $\exp(-x(1-by)g/2)$.

\bigskip

\noindent\textbf{Case 3}:
After the change of coordinates $(X,Y,Z)=(x,xy,xz)$, the system becomes
\begin{equation} \label{2eq8}
\dot X =  2X+aX^2-eY-fZ,\quad \dot Y =  Y(1+aX),\quad \dot Z =  Z(3+(a+g)X).
\end{equation}
The critical point at the origin of (\ref{2eq8}) is in the Poincar\'e domain and hence is linearizable via an
analytic change of coordinates which can be chosen to be of the form $(\tilde{X},\tilde{Y},\tilde{Z})=(X-eY+fZ+O(2),Y(1+O(1)),Z(1+O(1)))$.

The two first integrals $\tilde{\phi}=\tilde{X}^{-1}\,\tilde{Y}^2$ and
$\tilde{\psi}=\tilde{X}^{-2}\,\tilde{Y}\,\tilde{Z}$ of the linear system
pull back to first integrals of the form
\[\phi_1=x\,y^2(1+O(1)),\quad\mbox{and}\quad \phi_2=yz(1+O(1)).\]
The expression $\xi = x^ay^{a+g}z^{-g}$ satisfies $\dot\xi=(a-2g)\xi$ and so the system is linearizable from
Theorem~\ref{linthm}
\bigskip

\noindent\textbf{Case 4}:
The transformation $X=x/(1+ax/2-by)$, decouples the terms in $X$ and $z$ to give
\begin{equation}\label{2eq9.1}
\dot X =X\big(2+(1-aX/2)cz\big),\qquad \dot z =z(1+kz),
\end{equation}
with a linearizable node at the origin.
Thus, there is a linearizing change of coordinates
$\tilde{X} = \tilde{X}(X,z)=x(1+O(1))$, $\tilde{Z} = \tilde{Z}(z) = z(1+O(1))$
which brings \eqref{2eq9.1} to the form $\dot{\tilde{X}} = 2\tilde{X}$ and $\dot{\tilde{Z}} = \tilde{Z}$.

Taking $Y=y/(1+ax/2-by)$, we get $\dot Y=Y(-1-acXz/2)$.
It is sufficient, therefore, to find $\ell(\tilde{X},\tilde{Z})$ such that
$\dot \ell(\tilde{X}, \tilde{Z})=X(\tilde{X},\tilde{Z})z(\tilde{Z})$.
Then the substitution $\tilde{Y}=Ye^{ac\ell/2}$ will linearize the system.

If $x(X,Z)z(Z)=\sum_{i+j>0}a_{ij}X^{i}Z^{j}$, then $\ell(X, Z)=\sum_{i+j>0}\frac{a_{ij}}{2i+j}X^{i}Z^{j}$
gives a convergent expression for $\ell$.

\bigskip

\noindent\textbf{Case 5}:
A change of coordinates $(X,Y,Z)=(\frac{x}{1+by},y,\frac{z}{1+by})$,
brings the system to the form
\begin{equation}
\dot X =  X(2+aX+cZ)(1+bY),\quad \dot Y  =  -Y(1+bY),\quad \dot Z =  Z(1+gX+kZ)(1+bY).
\end{equation}
After rescaling the system by $(1+bY)$, we have $\dot Y =  -Y$
and the first and third equation give a linearizable node.
This implies the original system is integrable.  However, since $\dot Y=-Y$ the system must
also be linearizable by Theorem~\ref{linthm}.
\bigskip

\noindent\textbf{Case 6}:
A linearizing change of coordinates is given by
\[(X, Y, Z)=(x(1-ey)^{-\frac{b}{e}}(1+kz)^{-\frac{c}{k}}, y(1-ey)^{-1}, z(1+kz)^{-1}).\]
When $k=0$, we have an exponential factor $\exp(z)$, and we replace $(1+kz)^{-\frac{c}{k}}$ by $\exp(-cz)$.
Similarly, when $e=0$, we have an exponential factor $\exp(y)$ and replace $(1-ey)^{-\frac{b}{e}}$ by $\exp(by)$.

\bigskip

\noindent\textbf{Case 7}:
The system has an invariant algebraic plane $\ell=1+ax/2=0$ with cofactor $L_{\ell} =ax$ yielding a first integral
$\phi=x^{-1}\, y^{-1} \, z \, \ell^{\frac{d-g+a}{a}}$.  
We also have an inverse Jacobi multiplier
$IJM=x^{\frac{5}{2}}y^{3}\ell^{-\frac{1}{2}-\frac{2d}{a}+\frac{g}{a}}$.

When $a=0$, an exponential factor $\exp(x)$ will appear and replace $\ell^{\frac{d-g+a}{a}}$ and $\ell^{-\frac{1}{2}-\frac{2d}{a}+\frac{g}{a}}$ by $\exp(\frac{d-g}{2}x)$ and $\exp((\frac{g}{2}-d)x)$ respectively.
Theorem~1 therefore guarantees the existence of a second first integral of the
form $\psi=x^{-3/2}y^{-2}z(1+O(1))$.  From these two integrals it is easy to
construct integrals of the form required as $\phi_1=\phi^2 \psi^{-2}=xy^2(1+\ldots)$ and $\phi_2=\phi^3 \psi^{-2}=yz(1+\ldots)$.
Since $\xi=x\ell^{-1}$ satisfies $\dot\xi=2\xi$, the system is linear by Theorem~\ref{linthm}.

\bigskip

\noindent\textbf{Case 8}:
In this case, our system is
\[\dot x  =  x(2+ax+cz),\qquad \dot y =  y(-1+dx+fz),\qquad \dot z =  z(1+gx+kz).\]
The first and third equations gives a linearizable node, so it
suffices to find $\ell(X,Y)$ such that
\begin{equation}\label{2eq16}
\dot \ell(X, Z)=dx(X, Z)+fz(X, Z)
\end{equation}
The transformation $Y=ye^{-\ell}$ then linearizes the second equation.

Writing $dx(X, Z)+fz(X, Z) = \sum_{i+j>0}a_{ij}X^{i}Z^{j}$, we obtain
$\ell(X, Z)=\sum_{i+j>0}\frac{a_{ij}}{2i+j}X^{i}Z^{j}$, which is clearly convergent.

\bigskip

\noindent\textbf{Case 9}:
The system (\ref{1}) can be written as
\begin{equation} \label{2eq17}
\dot x  =  x(2+ax+cz),\qquad \dot y  =  y(-1+dx+ey),\qquad \dot z  =  z(1+gx+kz).
\end{equation}
The first and third equations in (\ref{2eq17}) give a linearizable node.
To linearize the second equation, we seek an invariant surface
of the form $\ell+\chi y = 0$ with cofactor
$dx+ey$ where $\ell=\ell(X,Z)$, $\chi=\chi(X,Z)$, and $\ell(0)=1$.
The change of variable  $Y=\frac{y}{\ell+\chi y}$ will then linearize the second equation.

To find $\ell$ and $\chi$ we therefore need to solve
\begin{equation}\label{2eq18}
\dot \chi -\chi = e\,\ell, \qquad \dot \ell = d\,x\,\ell.
\end{equation}

To find $\ell$, we write $\ell=e^{\psi}$ and solve $\dot \psi=dx$.
Let $\psi=\sum_{i+j>0}c_{ij}X^iZ^j$, then
\[\sum_{i+j>0}(2i+j)c_{ij}X^iZ^j=d x(X,Z) =d\,X+\sum_{i+j>1}d_{ij}X^iZ^j,\]
for some $d_{ij}$.  Clearly, $c_{10}=\frac{d}{2}$, $c_{01}=0$, $c_{ij}=\frac{d_{ij}}{2i+j}$ for $i+j>1$.
The convergence of $\sum_{i+j>1}d_{ij}X^iZ^j$, guarantees the convergence of $\psi$ and hence $\ell$.
Furthermore, it is clear that $\ell$ will contain no term in $Z$.

Now, writing $\ell=\sum b_{ij}X^iZ^j$, and noting that $a_{01}=0$, we find that
$\chi=\sum \frac{e}{2i+j-1}a_{ij}X^iZ^j$ gives a convergent expression for $\chi$.

\bigskip

\noindent\textbf{Case 10}:
In this case the system (\ref{1}) reduces to
\begin{equation} \label{2eq19}
\dot x  =  x(2+ax),\qquad \dot y  =  y(-1+dx+ey),\qquad \dot z  =  z(1+gx+hy).
\end{equation}
The transformation
\begin{equation}\label{2eq20}
X=\frac{x}{1+ax/2}, \qquad  Y=\frac{y}{\ell+\chi y}
\end{equation}
would linearize the first and the second equations in (\ref{2eq19}) if
$\ell+\chi y =0$ were the defining equation of an invariant algebraic surface
with cofactor $dx+ey$ where $\ell=\ell(X)$, $\chi=\chi(X)$ and $\ell(0)=1$.
This condition is equivalent to
\begin{equation}\label{2eq21.5}
\dot \chi -\chi =e\,\ell, \qquad \dot \ell = d\, x\, \ell.
\end{equation}
The second of these equations is clearly satisfied if we take $\ell = (1+ax/2)^{2/a} = (1-aX/2)^{-2/a}$ (when $a=0$ let $\ell = e^x = e^X$).
Writing $\chi=\sum a_{i}X^i$ and $\ell=\sum b_{i}X^i$,
equation (\ref{2eq21.5}) is satisfied if we set
$a_i = b_i/(2i-1)$.  The resulting function $\chi$ is clearly convergent.

To linearize the third equation, it is suffices to find $\gamma(X,Y)$ such that
$\dot \gamma =g\,x(X)+h\,y(X,Y)$, and take $Z=z\exp(-\gamma)$.

Writing,
\[
\gamma=\sum_{i+j>0} c_{ij} X^{i}Y^{j},\quad x(X)=\sum_{i>0} d_{i} X^{i}, \quad y(X,Y)=\sum_{i+j>0} e_{ij} X^{i}Y^{j},
\]
we see that we require
\[\sum (2i-j)c_{ij}X^iY^j =\sum g\,d_i X^i+\sum h\,e_{ij}X^iY^j=\sum f_{ij} X^i Y^j.\]
If $y(X,Y)$ contains no terms of the form $X^kY^{2k}$, then we can set $c_{ij}=(gd_{i}+he_{ij})/(2i-j)$
for $2i\neq j$, and $c_{ij}=0$ otherwise, to find a convergent expression for $\gamma$.

Therefore, it just remains to show that the inverse transformation $y = y(X,Y)$ from (\ref{2eq20})
contains no term like $(XY^2)^n$.
From (\ref{2eq20}),
\[y=\frac{\ell \,Y}{1-\chi Y}=\sum \ell \, \chi^n\,Y^{n+1} \]
suppose $n+1=2m$ for some $m$. It is suffices to show that $\ell \, \chi^n$ contains no term $X^m$ or in another word
$\ell \, \chi^{2m-1}$ has no term $X^m$.
Since
\[\dot \chi=\frac{d\chi}{dt}=2X\frac{d\chi}{dX},\]
then, from equation (\ref{2eq21.5}), we obtain
\begin{equation}\label{2eq21}
(\frac{2X}{2m}\frac{d\chi^{2m}}{dX}-\chi^{2m})=e\, \ell \,\chi^{2m-1}
\end{equation}
However, the term in $X^m$ in $\chi^{2m}$ clearly vanishes on the left hand side of (\ref{2eq21}), so that either
$e=0$, in which case $\chi\equiv 0$, or the coefficient of $X^m$ in $\chi^{2m-1}$ vanishes.
Thus $y$ in (\ref{2eq20}) contains no $(XY^2)^n$ and we have established the existence of a linearizing
change of coordinates.

\bigskip

\noindent\textbf{Case $\bf 11$}:
The transformation $w=yz$ brings the system to the form
\begin{equation} \label{2eq23}
\dot x  =  x(2+ax-4fz),\qquad \dot w  =  wx\tilde{d},\qquad \dot z  =  z(1+gx-fz)-ew,
\end{equation}
where $\tilde{d}=d+g$.
In this case we seek and expression $\psi=\sum w^i \psi_i(x,z)$ such that
$\dot{\psi}=x$.  If such a $\psi$ exists, then $\phi=we^{-\tilde{d}\psi}$ is a first integral.

We write the vector field as $\mathit{X}=\mathit{X_0}+\mathit{X_1}+\mathit{X_\omega}$, where
\begin{equation*}
\mathit{X_0}=x(2+ax-4fz)\frac{\partial}{\partial x}+z(1+gx-fz)\frac{\partial}{\partial z}, \quad
\mathit{X_1}=-ew\frac{\partial}{\partial z}, \quad \mathit{X_\omega}=wx\tilde{d}\frac{\partial}{\partial w}.
\end{equation*}

From $\mathit{X\psi}=x$, we get:
\begin{equation}\label{2eq24}
\mathit{X_0\psi_0}  =  x,\qquad \mathit{X_0\psi_k}+k\,x\,\tilde{d}\,\psi_k  =  -\mathit{X_1\psi_{k-1}}\quad (k>0).
\end{equation}

We now solve the equation \eqref{2eq24}.
To do this, we first show that
for every $B=\sum_{i+j>0}b_{ij}x^iz^j$, there exists an $A=\sum_{i+j>0}a_{ij}x^iz^j$,
such that $(X_0+k\tilde{d}x)\,A  = B$.  Since
\begin{equation*}
(X_0+k\tilde{d}x)A = \sum_{i+j>0}(2i+j){a_{ij}x^iz^j}+\sum_{i+j>0}(ia+jg+k\tilde{d}){a_{ij}x^{i+1}z^j}-
\sum_{i+j>0}(4i+j)f{a_{ij}x^iz^{j+1}},
\end{equation*}
we find that the $a_{ij}$ must satisfy
\begin{equation*}
(2i+j)a_{i,j}+((i-1)a+jg+k\tilde{d})a_{i-1,j}-(4i+j-1)fa_{i,j-1} = b_{i,j}.
\end{equation*}

Now in \eqref{2eq24} we can solve the equations term by term provided that the
right hand side of the equations have no constant term.  However, this follows
from the stronger observation that we can choose $\psi_i$ in \eqref{2eq24} to
be divisible by $x$.  To show this, we can suppose by induction that $x$ divides
the right hand side of \eqref{2eq24} (for $k=0$ this is immediate).  This implies that
$z(1+gx-fz)\frac{\partial\psi_k}{\partial z}$
is divisible by $x$ and this in turn shows that $\frac{\partial\psi_k}{\partial z}$ is divisible by $x$.
Writing $\psi_k(x, z)=g(z)+x\,h(x, z)$, we see that $g'(z) = 0$, so that $g$ is a constant.
Clearly $\psi_k-g$ also satisfies \eqref{2eq24} and we proceed by induction.

There is therefore no obstruction to solving these equations \eqref{2eq24} recursively,
and standard majorization techniques imply that the resulting series will be convergent.

Thus, we find a first integral $\phi_1=yze^{-\tilde{d}\psi}$ of the original system.
The system also has an inverse Jacobi multiplier
\[x^{\frac{3}{2}}y^{1+\frac{g+a/2}{\tilde{d}}}z^{\frac{g+a/2}{\tilde{d}}}.\]
Theorem~1 therefore guarantees a second first integral
\[\psi=x^{-\frac{1}{2}}y^{-\frac{g+a/2}{\tilde{d}}}z^{1-\frac{g+a/2}{\tilde{d}}}\big(1+O(1)\big),\]
from which we deduce the following first integral
\[\qquad \phi_2=\phi_1^{\frac{2d-a}{\tilde{d}}}\psi^{-2} = xy^2(1+O(1)).\]

When $\tilde{d}=0$ then $x^{3/2}y$ is an IJM and we proceed as before.
 
\qed

\bigskip

\subsection{$(1:-2:1)$-resonance}

\bigskip
\begin{theorem}
The origin of the three dimensional Lotka-Volterra system (\ref{1}) with $(\lambda,\mu,\nu)=(1,-2,1)$ is integrable if and only if on of the following conditions
holds:
\begin{equation*}
\begin{aligned}
1) \: & ab + ah - de - eg=ac - 2ak + gk=\\
             & ae + 2ah - de - 2eg=af + 2ak - dk - 2gk=\\
             & bd + 2bg - de - dh -eg=bf+ce-2ch-ek-fh+2hk=\\
             & bk - ce + ek - hk=cd + 2cg - 2dk + fg - 2gk=\\
             & ef + ek - 2hk=0\\
2) \: & a=d=f + k=g=h=0\\
3) \: & a - 2g=b + e=c - 3k=d=f + k=h=0\\
4) \: & c=d=f=g=0\\
5) \: & a=d=f=g=0\\
5^*) \: & c=d=f=k =0\\
6) \: & b - h=d=f=0\\
7) \: & a - g=c - k=d=f= 0\\
8) \: & b + h=c + k=d=e - h=f - k=g= 0\\
9) \: & b + 3h=c - 3k=d=e + 2h=f + 2k=g= 0\\
10) \: & b=c=d=e - h=f - k= 0\\
10^*) \: & a - d=b - e=f=g=h= 0\\
11) \: & b=e=h= 0\\
12) \: & a + d=b=cd - 2dk - gk=f + k=h= 0\\
13) \: & a - d=b - e=f + k=g=h= 0\\
13^*) \: & a + d=b=c=e - h=f - k=0\\
14) \: & c - 2k=d=f + k=g=h=0\\
14^*) \: & 2a - g=b=c=2d + g=f=0\\
15) \: & a + g=b + 3h=c - 3k=d=e + 2h=f + 2k= 0\\
16) \: & 3a - g=b=2c - k=3d + g=e + h=f= 0\\
17) \: & a + g=b + h=c=d + g=e + h=f= 0\\
18) \: & 3a - g=3b + h=c=3d + 2g=3e - 2h=f= 0\\
19) \: & 3a - g=3b + h=c + k=3d + 2g=3e - 2h=f=0\\
20) \: & b=d=f+k=g=h=0\\
20^*) \: & a + d=b=c=f=h= 0\\
21) \: & a + d = b = c = f = k = 0\\
\end{aligned}
\end{equation*}
Moreover, the system is linearizable if and only if either one of the conditions (2)-(8), (10)-(14), (16)-(17), (20)-(21) or one of the following holds:
\begin{equation*}
\begin{aligned}
1.1) \: & a=bk-ch=d=e-h=f-k=g=0\\
1.1^*) \: & a-d=b-e=c=dh-eg=f=k=0\\
1.2) \: & a-g=b-h=c-k=d-g=e-h=f-k=0\\
\end{aligned}
\end{equation*}
\end{theorem}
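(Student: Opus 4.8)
The plan is to follow the template set by the two preceding theorems: the conditions listed are precisely the necessary ones produced by the computational procedure of Section~3 (existence of two first integrals of the form \eqref{two1stInt}, here carried to degree~$12$, followed by a factorised Gr\"obner basis and a primary decomposition), so the substance of the proof is to verify \emph{sufficiency}, case by case. The first reduction is that the involution $(x,y,z)\mapsto(z,y,x)$ preserves the $(1:-2:1)$ resonance --- it sends $(\lambda,\mu,\nu)=(1,-2,1)$ to itself while interchanging the $x$- and $z$-equations --- so each starred case is the image of the corresponding unstarred one and need not be treated separately, and the same remark transfers the linearizing transformations. It then remains to handle each unstarred case using the four mechanisms assembled in Section~2.

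The mechanisms I expect to use, in roughly decreasing frequency, are: \textbf{(a)} a Darboux first integral built from the three coordinate planes together with one extra invariant plane, which for this resonance has the form $\ell = 1 + ax - \tfrac{e}{2}y + kz$ with cofactor $ax+ey+kz$ (the coefficient $-e/2$ reflecting $\mu=-2$, exactly as the $a/2$ of the $(2:-1:1)$ theorem reflected $\lambda=2$), giving pairs $\phi_1 = x^2y\,\ell^{m_1}$, $\phi_2 = yz^2\,\ell^{m_2}$ with exponents determined by proportionality of cofactors, and with an exponential factor $\exp(\alpha x+\beta y+\gamma z)$ replacing $\ell$ when it degenerates (splitting on the vanishing of $b$ and $h$, as in Case~1 of the $(1:-1:1)$ theorem); \textbf{(b)} decoupling two of the variables into a linearizable node via the Poincar\'e-domain principle of Section~2 and then linearizing the third equation by solving a homological equation $\dot\ell = (\cdots)$ term by term, the solution being manifestly convergent by comparison with the node's linearizing series, exactly as in Cases~3,~5 of the $(1:-1:1)$ theorem and Cases~8--10 of the $(2:-1:1)$ theorem; \textbf{(c)} a monomial blow-down or substitution carrying either the full system or a reduced two-dimensional one into the Poincar\'e domain, where first integrals of the linear model pull back; and \textbf{(d)} Theorem~1, which upgrades a product-of-powers first integral together with a product-of-powers inverse Jacobi multiplier to a second \emph{analytic} first integral whenever the cross-product bound \eqref{boundedK} holds --- I expect this to be needed for the cases whose defining ideals are the most involved, e.g.\ Cases~12,~16,~18,~19.

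For linearizability, most of the listed cases follow from integrability via Theorem~\ref{linthm}: one exhibits a monomial semi-invariant $\xi = x^\alpha y^\beta z^\gamma(1+O(x,y,z))$ with $X(\xi) = (\alpha\lambda+\beta\mu+\gamma\nu)\xi$ --- typically $\xi = x\,\ell^{-1}$ or $\xi = z\,\ell^{-1}$ for the plane $\ell$ already in play, or $\xi$ a product of coordinate powers --- and the linearizing change of coordinates is read off from the proof of that theorem. The genuinely separate linearizability cases (1.1, 1.1*, 1.2) require writing down explicit linearizing maps, of the form $(X,Y,Z) = (x\,\ell^{c_1},\,y\,\ell^{c_2},\,z\,\ell^{c_3})$, again splitting into a subcase where $\ell$ is a bona fide invariant plane and one where it collapses to an exponential factor.

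The main obstacle is not a single conceptual point but the accumulation of cases that need the delicate power-series arguments of types (b) and (d). As in Cases~9--11 of the $(2:-1:1)$ theorem, in several $(1:-2:1)$ cases the term-by-term solution of the relevant homological equation hits a genuine resonance --- a monomial whose divisor (of the shape $i+2j$, $2i+j$, $i-2j$, or $2i-j$, according to which pair of variables has decoupled and with which weights) vanishes --- and one must prove, by an identity obtained from differentiating a suitable power of the unknown series (compare \eqref{2eq21}), that the offending resonant coefficient is forced to vanish, so that no real obstruction arises; getting each of these vanishing arguments right, and then verifying convergence by majorants, is the part that demands care. A secondary subtlety specific to this resonance is that, since $2\lambda+\mu=0$, the naive blow-up $(x,x^2y,xz)$ that served Case~3 of the $(2:-1:1)$ theorem now produces a zero eigenvalue; where a blow-down is needed it must be arranged differently (for instance $(x,x^3y,xz)$, with eigenvalues $(1,1,2)$, or a single substitution such as $w=xz$ carrying eigenvalue $\lambda+\nu=2$ while leaving $y$ with its own equation), and checking that the resulting singularity genuinely lies in the Poincar\'e domain is where this case departs from the earlier analysis.
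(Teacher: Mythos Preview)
Your overall template is right, and mechanisms (a), (b), (d) together with Theorem~\ref{linthm} do carry most of the work; the starred duality is exactly as you say. But several of your case-allocations are off in ways that would leave real gaps. First, you do not anticipate \emph{quadratic} invariant algebraic surfaces: Cases~9, 15, 16, 18 and 19 are handled in the paper not by the linear plane $1+ax-\tfrac{e}{2}y+kz$ but by degree-two surfaces such as $(1+hy+kz)^2 + (\text{linear in }x,xy,xz)$ or $(1+ax+by)^2 + (\cdots)$, and without these you will not find Darboux integrals there. Conversely, the inverse-Jacobi-multiplier mechanism (your (d), Theorem~1) is \emph{not} what resolves Cases~12, 16, 18, 19; those are pure Darboux. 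Theorem~1 is instead needed for Cases~7, 13 and 14, and Case~13 carries a further subtlety you have not flagged: after a preliminary change of coordinates one has $\phi=X^{-1}YZ^3$ and $IJM=X^2Y$, but the cross-product hypothesis of Theorem~1 \emph{fails} at $(i,j,k)=(0,1,2)$; the proof of Theorem~1 still goes through only because one checks directly that the offending coefficient $A_{(0,1,2)}$ vanishes in the transformed system.

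Two smaller points. Your worry about the blow-down (mechanism (c)) is moot: the paper uses no blow-down at all in the $(1:-2:1)$ analysis, so the $2\lambda+\mu=0$ obstruction never arises. And two cases need techniques outside your list: Case~3 is done by exhibiting a pair of explicit rational first integrals of degree~$4$ and~$6$ (not built from a single invariant plane), while Case~6 requires a scalar change $X=x\,m(y)$, $Z=z\,m(y)$ where $m$ is obtained by solving a first-order linear ODE in~$y$ explicitly and checking analyticity of the resulting integral expression. Your resonance-cancellation idea (differentiating a power to kill the coefficient of the bad monomial, as in the $(2:-1:1)$ Case~10) \emph{is} used, in Cases~2 and~5 here, so that part of your plan is on target.
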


\noindent{\it Proof}. Cases 1*, 5*, 10*, 13*, 14* and 20.1* are dual to Cases 1, 5, 10, 13, 14 and 20.1 under the transformation $(x,z)\mapsto (z,x)$, and we do not consider them further.  The other cases are considered below.

\bigskip

\noindent\textbf{Case $\bf 1$}:
If $e\neq 0$, the system has an invariant algebraic plane $\ell=1+ax-\frac{e}{2}y+kz=0$
with cofactor $L_\ell=ax+ey+kz$ and produces two independent first integrals
$\phi_1=x^2 \, y\, \ell^{-1-\frac{2b}{e}}$ and $\phi_1=y\,z^2 \, \ell^{-1-\frac{2h}{e}}$.

If $e=0$, we have several sub cases:

\noindent \textbf{i) $\bf k\neq 0$}: We have $b=h=0$ and the system has first integrals
$\phi_1=x^2 \, y\, \ell^{-(\frac{f+2c}{k})}$ and $\phi_2=y\,z^2 \, \ell^{-2-\frac{f}{k}}$.

\noindent \textbf{ii) $\bf h\neq 0$}: We have $a=k=0$ and we have an exponential factor $\ell=\exp(-(d+2g)x+hy-fz)$ with cofactor
$L_{\ell}=-(d+2g)x-2hy-fz$ which yields first integrals
$\phi_1=x^2 \, y\, \ell^{\frac{b}{h}}$ and $\phi_2=y\,z^2 \, \ell$.

\noindent \textbf{iii) $\bf h=k=0$}: If $b=0$ we are in Case 11, so we assume that $b\neq 0$ which implies that $a=d+2g=f=0$.  Then there exists an exponential factor $\ell=\exp(2gx+by-2cz)$ with cofactor $L_{\ell}=2gx-2by-2cz$ which yields two first integrals
$\phi_1=x^2 y\, \ell$ and $\phi_2=y z^2$.

\bigskip

\noindent\textbf{Case $\bf 1.1$}:
If $e\neq0$, the change of coordinates $(X, Y, Z)=(x(1-\frac{e}{2}y+fz)^{-\frac{b}{e}}, y(1-\frac{e}{2}y+fz)^{-1}, z(1-\frac{e}{2}y+fz)^{-1})$ linearizes the system. When $e=0$, then either $k=0$ or $b=0$.  In the first case, we can linearize via $(X,Y.Z)=(x\,\exp(\frac{1}{2}by-cz),y,z)$, and in the second, taking $k\neq 0$, we can linearize via $(X,Y,Z)=(x(1+kz)^{-\frac{c}{k}},y(1+kz)^{-1},z(1+kz)^{-1})$.

\bigskip

\noindent\textbf{Case $\bf 1.2$}:
In this case we have an invariant plane $\ell=1+ax-\frac{b}{2}y+cz$ with cofactor $L_{\ell}=ax+by+cz$ and the linearizing change is $(X, Y, Z)=(x\,\ell^{-1}, y\,\ell^{-1}, z\,\ell^{-1})$.

\bigskip

\noindent\textbf{Case $\bf 2$}:
The system
has an invariant algebraic plane $\ell =1-\frac{e}{2}y-fz = 0$ with  cofactor
$L_{\ell} =ey-fz$.  The substitution
\begin{equation}\label{3eq3}
Y=y\,\ell^{-1}\,(1-fz)^2, \qquad  Z=\frac{z}{1-fz}.
\end{equation}
gives $\dot Y = -2Y$ and $\dot Z = Z$, and the changes of coordinates $X=xe^{-\phi}$
will give $\dot X = X$ if and only if
\begin{equation}\label{eq4}
\dot \phi(Y, Z) = -2Y \frac{\partial \phi}{\partial Y}+Z \frac{\partial \phi}{\partial Z}=b\, y(Y, Z)+c\, z(Z).
\end{equation}

An obstruction to the existence of $\phi$ is possible only if $y=y(Y,Z)$ in (\ref{eq4})
contains a term of the form $(YZ^2)^n$. According to (\ref{3eq3}),
one can find
\[y=\frac{Y(1+fZ)}{1+\frac{e}{2}Y(1+fZ)^2}=\sum_{n\ge1}(-\frac{e}{2})^{n-1} Y^{n} (1+fZ)^{2n-1},\]
and hence $y$ contains no term of the form $(YZ^2)^n$.

\bigskip

\noindent\textbf{Case $\bf 3$}:
If $g=0$ then we obtain a sub-case of Case 2 and if $f=0$ we obtain a subcase of Case 5*.
Hence, we shall assume that $f$ and $g$ are both non-zero.
The system has two first integrals:
\[\phi=\frac{xy(gexy-2gx+2fz-2f^2z^2)}{(1-2gexy+2gx-2fz+f^2z^2)^2}\]
and
\[\psi=\frac{x^2y^2z^2}{(1-2gexy+2gx-2fz+f^2z^2)^3}.\]
From these, we obtain two first integrals of the desirable form
\[\phi_1=-\frac{\phi}{2g}+\frac{f}{g}\sqrt{\psi}=x^2\,y(1+\ldots)\]
and
\[\phi_2=\frac{\psi}{\phi_1}=y\,z^2(1+\ldots).\]

The expression $\xi = xyz^{-2}$ satisfies $\dot\xi=-3\xi$ and so the system is linearizable by Theorem~\ref{linthm}.
\bigskip

\noindent\textbf{Case $\bf 4$}:
The system (\ref{1}) reduces to
\begin{equation} \label{3eq7}
\dot x = x(1+ax+by),\quad \dot y = y(-2+ey),\quad \dot z = z(1+hy+kz).
\end{equation}
The change of coordinates $Y=y/(1-ey/2)$ gives $\dot Y = -2Y$. To linearize the first equation,
we seek an invariant algebraic surface $A(Y)+B(Y)x=0$ with cofactor $ax+by$, where $A$ and $B$ are analytic with $A(0)=1$.
Thus we seek $A$ and $B$ to satisfy the equation
\begin{equation}\label{3eq8}
-2Y\,A'(Y)=b\,y\,A(Y),\qquad -2Y\,B'(Y)+B(Y)=a\,A(Y).
\end{equation}
The first equation gives $A = (1-ey/2)^{b/e}$ (or $\exp(by/2)$ when $e=0$).

Writing $A=\sum_{i\ge0} a_{i}Y^{i}$, we have $B=\sum_{i\ge0} a\frac{a_i}{1-2i}Y^{i}$, which is clearly convergent.

The substitution $X=x/(A+Bx)$ linearizes the first equation of (\ref{3eq7}).  In the same way, we
can find an invariant surface $C(Y)+D(Y)z=0$ with cofactor $hy+kz$ to so that $Z=z/(C+Dz)$ linearizes
the third equation of (\ref{3eq7}).

\bigskip

\noindent\textbf{Case $\bf 5$}:
In this case the system (\ref{1}) becomes
\begin{equation*}
\dot x = x(1+by+cz),\quad \dot y = y(-2+ey),\quad \dot z = z(1+hy+kz),
\end{equation*}
The substitution $Y=y/(1-ey/2)$ linearizes the second equation.
As in the previous case, we can find an invariant surface $C(Y)+D(Y)z=0$ with cofactor $hy+kz$ where
$C$ and $D$ are analytic with $C(0)=1$. Then
$Z = z/(C+Dz)$ linearizes the third equation.  Such a $C$ and $D$ must satisfy
\begin{equation}\label{eqqq10}
-2Y\,C'(Y)=h\,y\,C(Y),\qquad -2Y\,D'(Y)+D(Y)=k\,C(Y).
\end{equation}

We seek a transformation $X=xe^{-\phi}$ which linearizes the first equation.
Such a $\phi$ satisfies
\begin{equation}\label{eq11}
\dot \phi(Y, Z)=by(Y)+cz(Y, Z).
\end{equation}
Taking $\phi = \sum a_{ij}Y^iZ^j$, we find that $\dot\phi = \sum (2i-j)a_{ij}$.
It is clear that \eqref{eq11} can be solved analytically for $\phi$ as long as
$z(Y, Z)$ contains no term of the form $(YZ^2)^n$.  Since
\[
z(Y,Z)= \frac{ZC(Y)}{1-ZD(Y)} = \sum_{i\ge0} CD^{i-1}Z^i,
\]
it is sufficient to show that $CD^{2n-1}$ contains no term in $Y^n$.

However, from the defining equation \eqref{eqqq10}, we have
\[
   CD^{2n-1} = \frac{1}{k}\Big( -2Y\frac{\partial D}{\partial Y} + D \Big)D^{2n-1} =
   \frac{1}{k}\Big( -\frac{Y}{n}\frac{\partial D^{2n}}{\partial Y} + D^{2n} \Big),
\]
which clearly does not contain any term of degree $Y^n$.   When $k=0$, we have $D = 0$ from
\eqref{eqqq10} and the corresponding result is trivial.

\bigskip

\noindent\textbf{Case $\bf 6$}:
In this case, the change of variables
$X=xm(y)$ and $Z=zm(y)$ for some analytic function $m(y)$ with $m(0)=1$, brings the system to the form
\begin{equation} \label{3eq13}
\begin{aligned}
\dot X & =  X(1+ax+cz+by+\frac{m'}{m}y(-2+ey)),\\
\dot y & =  y(-2+ey),\\
\dot Z & =  Z(1+gx+kz+by+\frac{m'}{m}y(-2+ey)).
\end{aligned}
\end{equation}
We choose $m$ so that
\[1+by+\frac{m'}{m}y(-2+ey)=\frac{1}{m}.\]
This equation has an explicit solution,
\[m=-\frac12\sqrt{y}(1-ey/2)^{-(\frac{b}{e}+\frac{1}{2})}\,\int y^{-\frac{3}{2}}(1-ey/2)^{\frac{b}{e}-\frac{1}{2}},\]
which can be seen to be analytic in $y$ with $m(0)=1$, taking the definite integral as a Laurent series in odd powers of $\sqrt{y}$.
After scaling by $m(y)$, the system becomes,
\begin{equation*}
\dot X  =  X(1+aX+cZ),\qquad \dot y  =  y(-2+ey)m(y),\qquad \dot Z  =  Z(1+gX+kZ).
\end{equation*}
Clearly the first and third equations gives a linearizable node.  The second equation
can be linearized by a substitution $Y = \ell(y)$, such that
\[y(2-ey)m(y)\frac{d\ell(y)}{dy} = 2\ell(y), \quad \ell(0)=0, \quad \ell'(0)=1.\]   This is clearly
solvable analytically once we have determined $m(y)$ as above.

\bigskip

\noindent\textbf{Case $\bf 7$}: 
In this case the system has an invariant algebraic plane $\ell=1-ey/2=0$ with cofactor $L_\ell=ey$ producing a first integral
\[\phi=x\,z^{-1}\,\ell^{\frac{h-b}{e}}\]
It is easy to see that we have an inverse Jacobi multiplier
\begin{equation}
IJM=y^{\frac{3}{2}}z^{3}(1-ey/2)^{\frac{b}{e}-\frac{2h}{e}+\frac{1}{2}}
\end{equation}
When $e=0$, we take $\phi=x\,z^{-1}\,\exp(\frac{b-h}{2}y)$ and $IJM=y^{\frac{3}{2}}\,z^{3}\,\exp(\frac{2h-b}{2}y)$ above.
By Theorem~1, we can obtain a second first integral $\psi=xy^{-\frac{1}{2}}z^{-2}\big(1+O(1)\big)$. Now we can construct two independent first integrals of the desired form as
\[\phi_1=\phi^{4} \psi^{-2}=x^2y(1+m(x, y, z))\quad\mbox{and}\quad\phi_2=\phi^{2} \psi^{-2}=yz^2(1+\hat{m}(x, y, z)).\]
In this case the linearizing change is given by
\[(X, Y, Z)=(x\,(1+m)^{\frac{1}{2}}(-2+ey)^{\frac{1}{2}}, y\,(-2+ey)^{-1}, z\,(1+\hat{m})^{\frac{1}{2}}(-2+ey)^{\frac{1}{2}}).\]

\bigskip

\noindent\textbf{Case $\bf 8$}:
This case has invariant algebraic surfaces $\ell_1=1+\frac{b}{2}y-cz = 0$ and $\ell_2=1+ax+abxy-\frac{ac}{2}xz = 0$,
with cofactors $L_{\ell_1} =-by-cz$ and $L_{\ell_2} =ax$,
which allow us to find two independent first integrals:
\[\phi_{1}=x^{2}\, y\,\ell_{1}\ell^{-2}_{2}\qquad \mbox{and}\qquad
\phi_{2}=y\, z^{2}\,  \ell^{-3}_{1},\]
and linearizing change of coordinates $(X, Y, Z)=(x\,\ell_1\ell^{-1}_{2}, y\,\ell^{-1}_{1}, z\,\ell^{-1}_{1})$.

\bigskip

\noindent\textbf{Case $\bf 9$}:
In this case $\ell=ax(1+\frac{k}{2}z)-ahxy+(1+hy+kz)^2 = 0$ is an invariant algebraic surface
with  cofactor $L_\ell =ax-4hy+2kz$, giving rise to the first integrals
\[\phi_1=x^{2}\,y\,\ell^{-2}\qquad\mbox{and}\qquad \phi_2=y\,z^{2}.\]

\bigskip

\noindent\textbf{Case $\bf 10$}:
The change of variables $(Y,Z)=(\frac{y}{1+fz-ey/2},\frac{z}{1+fz-ey/2})$ gives a new system
\begin{equation*}
\dot x = x(1+ax),\quad \dot Y = -2Y(1+fgxZ),\quad \dot Z = Z(1+gx-fgxZ),
\end{equation*}
The first and the third equations obviously gives a node and therefore there is a
linearizing change of coordinates $\hat{X} = \hat{X}(x)$, $\hat{Z}=\hat{Z}(x,Z)$.
To linearize the second equation, it is suffices to find $\psi(\hat{X},\hat{Z})$
such that $\dot \psi=fgxZ$ and use the transformation $\hat{Y}=Ye^{2\psi}$.
Setting $\psi(\hat{X}, \hat{Z})=\sum_{i+j>0}a_{ij}\hat{X}^{i}\hat{Z}^{j}$ and
$x(\hat{X}, \hat{Z})Z(\hat{X}, \hat{Z})=\sum b_{ij}\hat{X}^{i}\hat{Z}^{j}$,
we find that $a_{ij}=d_{ij}/(i+j)$, giving a convergent expression for $\psi$.

\bigskip

\noindent\textbf{Case $\bf 11$}:
Then the system reduces to
\begin{equation*}
\dot x = x(1+ax+cz),\quad \dot y = y(-2+dx+fz),\quad \dot z = z(1+gx+kz),
\end{equation*}
The first and third equations gives a linearizable node.
We denote the linearizing coordinates by $X$ and $Z$.

The transformation $Y=ye^{-\phi}$ will linearize the second equation
if $\phi$ can be chosen so that $\dot \phi(X, Z)=dx(X, Z)+fz(X, Z)$.
Let $\phi(X, Z)=\sum_{i+j>0}a_{ij}X^{i}Z^{j}$ and $dx(X, Z)+fz(X, Z) = \sum_{i+j>0}b_{ij}X^{i}Z^{j}$,
then we require $a_{ij}=b_{ij}/(i+j)$, which gives a convergent expression for $\phi$.

\bigskip

\noindent\textbf{Case $\bf 12$}:
The system (\ref{1}) reduces to
\begin{equation*}
\dot x = x(1-dx+cz),\quad \dot y = y(-2+dx+ey-kz),\quad \dot z = z(1+gx+kz).
\end{equation*}
The system has invariant algebraic surfaces
$\ell_1=1-dx-\frac{e}{2}y+kz=0$ and $\ell_2=1-\frac{e}{2}y-\frac{ed}{2}xy+\frac{ke}{2}yz=0$
with cofactors
$L_{\ell_1} =-dx+ey+kz$ and $L_{\ell_2} =ey$.
The substitution $Y=y\,\ell_1\,\ell_{2}^{-2}$
linearizes the second equation, and the first and third equations define
a linearizable node.
\bigskip

\noindent\textbf{Case $\bf 13$}:
In this case system (\ref{1}) becomes
\begin{equation*}
\dot x = x(1+ax+by+cz),\quad \dot y = y(-2+ax+by-kz),\quad \dot z = z(1+kz),
\end{equation*}
The system has an invariant algebraic plane $\ell=1+kz=0$ with cofactor $L_\ell =kz$.
By mean of a change of variables
\[(X, Y, Z)=(x(1+kz)^{-\frac{c}{k}}, y(1+kz), z(1+kz)^{-1})\]
we arrive to the system
\begin{equation}\label{001}
\begin{aligned}
\dot X = & X(1+aX(1-kZ)^{-\frac{c}{k}}+bY(1-kZ)),\\ \dot Y = & Y(-2+aX(1-kZ)^{-\frac{c}{k}}+bY(1-kZ)),\\ \dot Z = & Z,
\end{aligned}
\end{equation}
with first integral $\phi=X^{-1}YZ^3$ and inverse Jacobi multiplier $IJM=X^{2}Y$.

We cannot apply Theorem~1, as there are non-negative integer values of $i$, $j$ and $k$ for which the cross product of $(1-i,-j,-1-k)$ and $(-1,1,3)$ is zero.  However, this is only true when, $(1-i,-j,-1-k)=\alpha\,(-1, 1, 3)$ for some $\alpha$.
Clearly, the only possibility is when $\alpha=-1$, $i=0$, $j=1$ and $k=2$.

However, in this case, the proof of Theorem~1 will still work as long as $A_{(0,1,2)}=0$ because, in this case, \eqref{tttttt1} will still hold.  But it is clear that in \eqref{001} that there are no terms in $YZ^2$ in the cofactors of $X$, $Y$ or $Z$, so indeed $A_{(0,1,2)}=0$ and we have a second first integral of the form $\psi = X^{-1}Z(1+O(X,Y,Z))$.  We get first integrals in the required form by pulling back
the first integrals $\phi_1=\phi/\psi^3=X^2Y(1+O(X,Y,Z))$ and $\phi_2=\phi/\psi=YZ^2(1+O(X,Y,Z))$ to the original coordinates.
The substitution $(\hat{X},\hat{Y},\hat{Z})= (Z/\psi,\phi_2/Z^2 ,Z)$ linearizes the system.

When $k=0$, we replace $(1+kz)^{-\frac{c}{k}}$ by $\exp(-cz)$ and proceed similarly.

\bigskip

\noindent\textbf{Case $\bf 14$}:
In this case we have two invariant algebraic planes $\ell_1=1-\frac{e}{2}y-fz=0$ and $\ell_{2}=1-fz=0$
with cofactors $L_{\ell_{1}} =ey-fz$ and $L_{\ell_{2}} =-fz$.
This allow us to construct a first integral $\phi=y\,z^{2}\ell_{1}^{-1}$
and inverse Jacobi multiplier $IJM=x^{2}\,z^{-2}\,\ell_{1}^{2-\frac{b}{e}}\:\ell_{2}^{\frac{b}{e}-1}$.
Now, Theorem~1 guarantees a second first integral $\psi=x^{-1}yz^3 (1+O(x,y,z))$.  We write the first
integrals in the desired form as
\[\phi_1=\phi^{3}\psi^{-2}=x^2\,y\big(1+m(x,y,z)\big)\qquad\mbox{and}\qquad \phi_2=\phi=y\,z^2(1+\hat{m}\big(x,y,z)\big).\]
The following change of coordinates linearizes the system
\[(X,Y,Z)=(x\,(1+m)^{\frac{1}{2}}(1-fz)^{-1}(1+\hat{m})^{-\frac{1}{2}},y\,(1-fz)^{2}(1+\hat{m}),z\,(1-fz)^{-1}).\]

\bigskip

\noindent\textbf{Case $\bf 15$}:
In this case  $\ell_1=(1+hy+kz)^2-kgxz=0$ and $\ell_2=gx(-2+gx+2hy-2kz)+(1+hy+kz)^2=0$
are invariant algebraic surfaces with cofactors $L_{\ell_1} =-4hy+2kz$ and
$L_{\ell_2} =-2gx-4hy+2kz$.  The two first integrals are given by
\[\phi_{1}=x^2\,y\,\ell^{-1}_{1}\,\ell^{-1}_{2}\qquad\mbox{and}\qquad \phi_{2}=y\,z^2\,\ell^{-1}_{1}\,\ell_{2}.\]

\bigskip

\noindent\textbf{Case $\bf 16$}:
The system has invariant algebraic surfaces $\ell_1 =1+2ax+2cz+a^2x^2-2ceyz=0$ and
$\ell_2 =2a^2x^2+2cz-ceyz+2ax=0$
with cofactors $L_{\ell_1}=2ax+2cz$ and $L_{\ell_2}=1+2ax+2cz$.
This gives a first integrals $\phi=x\,y\,z\ell^{{-\frac{3}{2}}}_{1}$ and $\psi = x^{-1}\ell_2 \ell_1^{-1/2}$, though the latter integral is
not of the required form.  However, when $z=0$, we have $\psi=2a$.  Hence, $\xi=(\psi-2a)x/(2cz) = 1+O(1)$ is analytic and satisfies $\dot\xi = \xi(-3ax+ey-cz)$.  From this we construct a linearizing change of coordinates $(X,Y,Z)=(x\,\ell_1^{-1/2},y\,\ell_1^{-1/2}\xi^{-1},z\,\ell_{1}^{-1/2}\xi)$.

When $c=0$ we have invariant surfaces $\ell_3=1+ax$ and $\ell_4=1+ax-ey/2$ with cofactors $ax$ and $ax+ey$, giving a linearizing change of coordinates $(X,Y,Z)=(x\,\ell_3^{-1},y\,\ell_3^2\ell_4^{-1},z\,\ell_3^{-4}\ell_4)$.

\bigskip

\noindent\textbf{Case $\bf 17$}:
This case has invariant algebraic surfaces $\ell_1 =1+ax-\frac{b}{2}y=0$ and $\ell_2 =1+kz+\frac{ak}{2}xz-bkyz=0$,
with cofactors $L_{\ell_1}=ax+by$ and $L_{\ell_2}=kz$.  From these we obtain first integrals
$\phi_1=x^2 \, y\, \ell^{-3}_{1}$ and $\phi_2=y\,z^2 \,  \ell^{-1}_{2}$, and a linearizing change of coordinates
$(X, Y, Z)=(x\,\ell^{-1}_{1}, y\, \ell^{-1}_{1}, z\,\ell_1\ell^{-1}_{2})$.
\bigskip

\noindent\textbf{Case $\bf 18$}:
In this case the system has an invariant algebraic plane $\ell =(1+ax+by)^2+kz(1+\frac{a}{2}x-by)=0$
with cofactor $L_{\ell}=2ax-4by+kz$. It is easy to obtain two independent first integrals
$\phi_1=x^2\,y$ and $\phi_2=y\,z^2 \,\ell^{-2}$.

\bigskip

\noindent\textbf{Case $\bf 19$}:
In this case we find two invariant algebraic surfaces $\ell_1 =(1+ax+by)^2-acxz=0$
and $\ell_2 =(1+ax+by)^2-2cz(1+ax-by-\frac{c}{2}z)=0$ with cofactors $L_{\ell_1}=2ax-4by$ and
$L_{\ell_2}=2ax-4by-2cz$. These give the first integrals
$\phi_1=x^2 \, y\, \ell^{-1}_{1}\,\ell_2$ and $\phi_2=y\,z^2 \, \ell^{-1}_{1} \ell^{-1}_{2}$.

\bigskip

\noindent\textbf{Case $\bf 20$}:
The system (\ref{1}) reduces to
\begin{equation*}
\dot x = x(1+ax+cz),\quad \dot y = y(-2+ey+fz),\quad \dot z = z(1-fz),
\end{equation*}
The first and third equations gives a linearizable node and
the change of coordinates $Y=\frac{y(1-fz)^2}{1-\frac{e}{2}y-fz}$
linearizes the second equation.

\bigskip

\noindent\textbf{Case $\bf 21$}:
The system in this case has three invariant algebraic surfaces $\ell_1 =1+ax-\frac{e}{2}y = 0$,
$\ell_2 =1+ax = 0$ and $\ell_3 =1-\frac{e}{2}y+\frac{ae}{2}xy = 0$ with cofactors $L_{\ell_1}=ax+ey$, $L_{\ell_2}=ax$ and $L_{\ell_3}=ey$.
It is easy to find two independent first integrals
$\phi_1=x^2 \, y\, \ell^{-1}_{1}$ and $\phi_2=y\,z^2 \, \ell^{\frac{a-2g}{a}}_{2} \ell^{-\frac{2h+e}{e}}_{3}$,
and a linearizing change of coordinates $(X, Y, Z)=(x\,\ell^{-1}_{2}, y\,\ell_2\ell^{-1}_{3}, z\,\ell^{-\frac{g}{a}}_{2}\ell^{-\frac{h}{e}}_{3})$.

When $a=0$, we replace $\ell_2^{1/a}$ with the exponential factor $\exp(x)$, while, when $e=0$, we replace $\ell_3^{1/e}$ by the
exponential factor $\exp(-y(1-ax)/2)$.

\qed



\begin{thebibliography}{10}

\bibitem{Basov2010}
V.~V. Basov and V.~G. Romanovski.
\newblock First integrals of a three-dimensional system in the case of one zero
  eigenvalue.
\newblock {\em J. Physics A: Math. Theor.}, 43:1--8, 2010.

\bibitem{Berrone2003}
L.~R. Berrone and H.~Giacomini.
\newblock Inverse {J}acobi multiplier.
\newblock {\em Rendiconti Del Cricolo Mathematico Di Palermo}, Series
  3:77--103, 2003.

\bibitem{Bobienski2005}
M.~Bobienski and H.~Zoladek.
\newblock The three-dimensional generalized {L}otka–{V}olterra systems.
\newblock {\em Ergodic Theory and Dynamical Systems}, 25:759--791, 2005.

\bibitem{Cairo2000}
L.~Cair\'o.
\newblock Darboux first integral conditions and integrability of the 3{D}
  {L}otka-{V}olterra system.
\newblock {\em J. of Nonlinear Mathematical Physics}, 7 N. 4:511--531, 2000.

\bibitem{CairoLlibre2000}
L.~Cair\'o and J.~Llibre.
\newblock Darboux integrability for 3{D} {L}otka-{V}olterra systems.
\newblock {\em J. Physics A: Math. Gen.}, 33 (12):2395--2406, 2000.

\bibitem{Y.T.Christodoulides2009}
Y.~T. Christodoulides and P.~A. Damianou.
\newblock Darboux polynomials for {L}otka-{V}olterra systems in three
  dimensions.
\newblock {\em J. of Nonlinear Mathematical Physics}, 16:339--354, 2009.

\bibitem{Christopher1999}
C.~Christopher and J.~Llibre.
\newblock Algebraic aspects of integrability for polynomial systems.
\newblock {\em Qualitative Theory of Dynamical Systems}, 1 number 1:71--95,
  1999.

\bibitem{C.Christopher2000}
C.~Christopher and J.~Llibre.
\newblock Integrability via invariant algebraic curves for planar polynomial
  differential systems.
\newblock {\em Ann. Differential Equations}, 16:5--19, 2000.

\bibitem{Christopher2003}
C.~Christopher, P.~Marde\v{s}i\'{c}, and C.~Rousseau.
\newblock Normalizable, integrable, and linearizable saddle points for complex
  quadratic systems.
\newblock {\em J. of Dynamical and Control Systems}, Volume 9 Issue 3:311--363,
  2003.

\bibitem{Christopher2004}
C.~Christopher and C.~Rousseau.
\newblock Normalizable, integrable and linearizable saddle points in the
  {L}otka-{V}olterra system.
\newblock {\em Qualitative Theory of Dynamical Systems}, 5:11--61, 2004.

\bibitem{DGPS}
G.-M.; Pfister G.; Sch{\"o}nemann~H. Decker, W.;~Greuel.
\newblock {\sc Singular} {2-0-3} --- {A} computer algebra system for polynomial
  computations.
\newblock 2002.
\newblock http://www.singular.uni-kl.de.

\bibitem{AFronville1998}
A.~Fronville, A.~P. Sadovski, and H.~Zoladek.
\newblock Solution of the 1 : -2 resonant center problem in the quadratic case.
\newblock {\em Fundamenta Mathematicae}, 157:191--207, 1998.

\bibitem{Gao1998}
P.~Gao and Z.~Liu.
\newblock An indirect method of finding integrals for three-dimensinal
  quadratic homogeoneous systems.
\newblock {\em Physics Letters A}, 244:49--52, 1998.

\bibitem{J.Gine2011}
J.~Gine, Z.~Kadyrsizova, Y.~Liu, and V.~G. Romanvski.
\newblock Linearizability conditions for {L}otka-{V}olterra planar complex
  quartic systems having homogeneous nonlinearities.
\newblock {\em Computers and Mathematics with Applicatios}, 61:1190--1201,
  2011.

\bibitem{Gine2009}
J.~Gine and V.~G. Romanovski.
\newblock Linearizability conditions for lotka-volterra planar complex cubic
  systems.
\newblock {\em J. Physics A: Math. Theor.}, 42:1--15, 2009.

\bibitem{Gine2010}
J.~Gine and V.~G. Romanovski.
\newblock Integrability conditions for {L}otka-{V}olterra plannar complex
  quintic systems.
\newblock {\em Nonlinear Analysis: Real World Applications}, 11:2100--2105,
  2010.

\bibitem{Gonzalez2000}
F.~Gonzalez-Gascon and D.~Peralta Salas.
\newblock On first integrals of lotka-volterra systems.
\newblock {\em Physics Letters A}, 266:336--340, 2000.

\bibitem{Gravel2002}
S.~Gravel and P.~Thibault.
\newblock Integrability and linearizability of the {L}otka-{V}olterra system
  with a saddle point with rational hyperbolicity ratio.
\newblock {\em J. Differential Equations}, 184:20--47, 2002.

\bibitem{C.Liu2004}
C.~Liu, G.~Chen, and C.~Li.
\newblock Integrability and linearizability of the {L}otka-{V}olterra systems.
\newblock {\em J. Differential Equations}, 198:301--320, 2004.

\bibitem{Ollagnier1997}
J.~Moulin-Ollagnier.
\newblock Polynomial first integrals of the {L}otka–{V}olterra system.
\newblock {\em Bull. Sci. Math.}, 121:463–476, 1997.

\bibitem{Ollagnier2001}
J.~Moulin-Ollagnier.
\newblock Liouvillian integration of the {L}otka-{V}olterra systems.
\newblock {\em Qualitative Theory of Dynamical Systems}, 2:307--358, 2001.

\bibitem{Wang2008}
Q.~Wang and Y.~Liu.
\newblock Linearizability of the polynomial differential systems with a
  resonant singular point.
\newblock {\em Bull. Sci. Math.}, 132:97--111, 2008.

\bibitem{Zoladek97}
H.~\.{Z}o\l\c{a}dek.
\newblock The problem of center for resonant singular points of polynomial
  vector fields.
\newblock {\em J. Differential Equations}, 137:94--118, 1997.

\end{thebibliography}
\end{document}